\newtheorem{assumption}{Assumption}
\newtheorem{theorem}{Theorem}
\newtheorem{remark}{Remark}
\newtheorem{proposition}{Proposition}
\newcommand{\RNum}[1]{\rm \uppercase\expandafter{\romannumeral #1\relax}}
\DeclareMathOperator{\diag}{diag}
\title{Two energy methods for distributed port-Hamiltonian systems and their application to stability analysis}
\author{Hannes Gernandt, Marco Roschkowski \\
\normalsize University of Wuppertal \\
\normalsize \texttt{\{gernandt, roschkowski\}@uni-wuppertal.com}}
\date{\today}
\begin{document}

\maketitle
\begin{abstract}
We develop two local energy methods for distributed parameter port-Hamiltonian (pH) systems on one-dimensional spatial domains. The methods are applied to derive a characterization of exponential stability directly in terms of the energy passing through the boundary over a given time horizon. The resulting condition is verified  for a network of vibrating strings where existing sufficient conditions cannot be applied. Moreover, we use a~local energy method to study the short-time behavior of pH systems with boundary damping which was recently studied in the context of hypocoercivity.
\end{abstract}

\textbf{Keywords:} distributed parameter systems, port-Hamiltonian systems, energy methods, stability, hypocoercivity, vibrating strings, networks 

\section{Introduction}
Port-Hamiltonian (pH) systems extend the Hamiltonian formalism by dissipation and interactions with an environment via ports, see \cite{SchJ14, ortega2008control, ramirez2022overview}. The pH modeling approach has been extended to infinite-dimensional systems which are often called distributed parameter pH systems~\cite{jacob2012linear, RasCSS20, augner2016stabilisation} and leads to systems in the Hilbert space~$L^2([a,b],\mathbb{R}^n)$  of the form 
\begin{equation*}
    \tfrac{\rm d}{{\rm d}t} x(t, \zeta) =\left(\bm{P_1}\tfrac{\partial}{\partial \zeta} + \bm{P}_0\right)(\bm{H}(\zeta)x(t, \zeta)),\ x(0,\zeta)=x_0(\zeta),\ (t,\zeta)\in[0,\infty)\times[a,b],\label{pH}
\end{equation*}
where the function $x:[0,\infty)\times[a,b]\rightarrow\mathbb{R}^n$ is the state, $x_0\in L^2([a,b],\mathbb{R}^n)$ the initial condition, $\bm{H}:[a,b]\rightarrow\mathbb{R}^{n\times n}$ the Hamiltonian density.

The stability
of \eqref{pH} is well-understood and often treated using a semigroup approach. To this end, one views \eqref{pH} as an abstract Cauchy problem 
\[
\dot x=\bm{A}x,\quad  
\bm{A}:=\left(\bm{P_1}\tfrac{\partial}{\partial \zeta} + \bm{P}_0\right) \bm{H}, 
\]
on some dense domain $D(\bm{A})\subseteq L^2([a,b],\mathbb{R}^n)$ which can be described by imposing suitable boundary conditions, see e.g.\ \cite{jacob2012linear}.

The study of stability properties of distributed parameter pH systems is still a recent topic \cite{jacob2019well, jacob2015c, zwart2010well, le2005dirac,  gernandt2024stability,mora2023exponential,Vill07} that can be used in the context of stabilization of distributed pH systems  \cite{ramirez2017stabilization, ramirez2014exponential, schmid2021stabilization, augner2016stabilisation,GERNANDT2025106030}, a closely related topic.

Here we focus on the \emph{exponential stability} of the pH system \eqref{pH}, meaning that there are constants $C, \omega > 0$ such that 
\[
\|x(t)\| \le C e^{- \omega t} \qquad \text{for all } t \ge 0
\] 
and for all solutions of \eqref{pH}. Compared to the stable case, the question when  exponential stability holds is less understood. 
Although there are some general results like the Gearhart-Prüss theorem, see Theorem~1.11 in Chapter 5 of \cite{engel2000one}, are often hard to verify for general infinite-dimensional systems, because they require the computation of resolvents along the imaginary axis which is only possible if more structure of the pH system is known~\cite{gernandt2024stability}. 

The additional pH structure is helpful to derive sufficient conditions for the exponential stability of \eqref{pH} that can be easily verified. For example it is known, see e.g.\ \cite{jacob2012linear}, that the condition
\begin{align}
\label{cond_suff_intro}
        \langle \bm{H}x, \bm{A} x \rangle_{L^2} + \langle  \bm{A}x, \bm{H}x\rangle_{L^2} \le -k \|\bm{H}(\delta) x(\delta)\|^2,
    \end{align}
for some $k>0$ and all $x\in D(\bm{A})$ at $\delta=a$ or $\delta=b$ is sufficient for exponential stability of \eqref{pH}. 

For many examples the condition \eqref{cond_suff_intro} can be easily verified leading to exponential stability. This is the case, for instance, for a vibrating string with a fixed left end-point at $a$ and a damper connected to the right end-point at $b$.  However, when this vibrating string is viewed as being composed of two smaller segments with natural coupling conditions for the forces and velocities at the interconnection point then surprisingly~\eqref{cond_suff_intro} fails to hold. Therefore,~\eqref{cond_suff_intro} is not a necessary condition. 

Recently, a characterization of exponential stability for distributed parameter pH systems of the form~\eqref{pH} was obtained in~\cite{trostorff2022characterisation}. Based on this, the asymptotic stability of such pH systems was characterized in~\cite{waurick2022asymptotic}. However, the conditions given in these works require that the solutions of a parameter-dependent family of (non-autonomous) ordinary differential equations are known, which is usually impossible even for small example systems, unless the Hamiltonian $\bm{H}$ is constant.

In the present work, we consider an integrated version of~\eqref{cond_suff_intro}, which leads to a new characterization of exponential stability. More precisely, given a fixed $\gamma>0$ such that
$\gamma \bm{H}(\zeta) \ge \pm \bm{P}_1^{-1}$, for all $\zeta \in [a, b]$, we show that the pH system \eqref{pH} is exponentially stable if and only if there are $0 < \sigma < \tau < T$ with $\tau - \sigma > 2\gamma (b-a)$ and a constant $k > 0$ such that
        \begin{align}
    \label{eqn: better condition_intro}
  ~~~~\langle (\bm{H}x)(T),x(T)\rangle -\langle (\bm{H}x)(0),x_0\rangle
             \le -2k \int\limits_\sigma^\tau x(t,\delta)^* \bm{H}(\delta)x(t, \delta) \, \mathrm{d} t
  \end{align}
  holds at one of the end-points $\delta=a$ or $\delta=b$ for all initial values $x_0\in D(\bm{A})$. Condition \eqref{eqn: better condition_intro} is particularly useful for pH systems that exhibit a network structure. In particular, our work is related to the stabilizability and observability of PDEs on graphs \cite{dager2006wave}.

To prove that \eqref{eqn: better condition_intro} is indeed a characterization of the exponential stability for pH systems \eqref{pH}, we develop local energy methods. 
Due to the seemingly hyperbolic structure of the pH system equations, we will also revisit energy methods that are used in the context of PDEs and in particular wave equations, see e.g.\ 
\cite[Chapter 2]{evans2022partial}, where they are used  to derive stability results.

Energy methods were to some extent already used in the literature on distributed pH systems in 
\cite{Vill07,jacob2012linear,cox1994rate} to derive the sufficient condition \eqref{cond_suff_intro}, see also \cite{VilZLGM09} for more general classes of systems.

Here, we revisit energy methods in the context of pH systems~\eqref{pH} and generalize them by considering functions that describe the local energy content for a fixed spatial coordinate $\zeta$ and a fixed time $t$, respectively,
\begin{align*}    
 \bm{F}
        (\zeta) &=\!\!\!\!\!\! \int\limits_{\sigma - \gamma (\zeta - a)}^{\tau + \gamma (\zeta - a)} \!\!\!x(t,\zeta)^* \bm{H}(\zeta)x(t, \zeta) \ \mathrm{d}t,\quad \zeta\in[a,b],\\
    \bm{G}(t) &=\!\!\! \int\limits_{\alpha + \gamma t}^{\beta - \gamma t} \!\!\!x(t,\zeta)^* \bm{H}(\zeta)x(t, \zeta) \, \mathrm{d} \zeta,\,\,\,\,  0 \le t \le \tfrac{\beta - \alpha}{2 \gamma},
\end{align*}
for suitable constants $\sigma,\tau,\gamma,\alpha,\beta>0$ such that the integration bounds are within the considered temporal and spatial domains.

By analyzing the properties of the local energy content functions $\bm{F}$ and $\bm{G}$ associated with pH systems~\eqref{pH}, we obtain the following contributions in  this paper:
\begin{itemize}
    \item [\rm (i)] We use the energy methods to obtain an equivalent condition to exponential stability of \eqref{pH} in terms of the energy content at the boundary;
    \item[\rm (ii)] We use the energy methods to show that there is no short-time decay for the class \eqref{pH}; 
    \item [\rm (iii)] We verify our sufficient condition for networks of strings, where the sufficient condition \eqref{cond_suff_intro} cannot be applied.
\end{itemize}

The paper is structured as follows. In Section~\ref{sec:prelim}, we recall preliminary results on the solution of~\eqref{pH} via the semigroup approach. In Section~\ref{sec:energy}, we introduce the two local energy methods and analyze the properties of the functions $\bm{F}$ and $\bm{G}$. These properties are used in Section~\ref{sec:long_and_short} to prove a new characterization of exponential stability and to give a negative result in the context of short-time behavior of distributed pH systems~\eqref{pH} with boundary damping. In Section~\ref{sec:vibrating_strings} the characterization of exponential stability is illustrated for two networks of vibrating strings. We conclude the paper in Section~\ref{sec:conclusion}. 

\subsection*{Notations}
We denote by $\textbf{0}_n$ the zero matrix and by $\textbf{1}_n$ the identity matrix of size $n\times n$. Given $\mathbb{K} = \mathbb{R}$ or $\mathbb{K} = \mathbb{C}$ we consider $\mathbb{K}^n$ equipped with the Euclidean norm which is denoted by $\|\cdot\|$. For a vector $x\in\mathbb{C}^n$ we denote by $x^*$ the  complex-conjugate transposed vector. Further,  $L^2([a,b],\mathbb{K}^n)$ denotes the Lebesgue space of measurable and square integrable functions $f: [a, b] \longrightarrow \mathbb{K}^n$ equipped with the norm $\|f\|_{L^2} = \big(\int\limits_a^b \|f(\zeta) \|^2 \mathrm{d}\zeta\big)^{\frac{1}{2}}$ and $H^1([a,b],\mathbb{K}^n)$ the corresponding Sobolev space of weakly differentiable functions whose  derivative is in $L^2([a,b],\mathbb{K}^n)$. 

\section{Preliminaries}
\label{sec:prelim}
To the right-hand side of~\eqref{pH}, we associate an operator $\bm{A} : D(\bm{A)} \longrightarrow L^2([a, b], \mathbb{K}^n)$
\begin{align}
\label{eq:def_A}
\begin{split}
    &\bm{A} = \left(\bm{P_1} \frac{\partial}{\partial \zeta} + \bm{P}_0\right) \bm{H},\\ 
\!D(\bm{A}) = \Biggl\lbrace  x \in L^2([a,b],\mathbb{K}^{n}) \ \!\big| \ &\!\bm{H}x\!\in\!H^1([a,b],\mathbb{K}^{n}), \, \bm{W}_B\begin{pmatrix}
    \bm{e}_\partial \\
    \bm{f}_\partial
\end{pmatrix}=0 \biggr\rbrace \subseteq L^2([a, b], \mathbb{K}^n).
\end{split}
\end{align}
for some matrix $\bm{W}_B\in\mathbb{R}^{n\times 2n}$, using the boundary flow $\bm{f}_\partial$ and boundary effort  $\bm{e}_\partial$ which are given by 
\begin{align}
\label{eq:boundaryflow_effort}
\begin{pmatrix}
    \bm{e}_\partial \\
    \bm{f}_\partial
\end{pmatrix} = \frac{1}{\sqrt{2}}\begin{pmatrix}
    \bm{1}_n & \bm{1}_n\\
    \bm{P}_1 & - \bm{P}_1
\end{pmatrix}\begin{pmatrix}
    (\bm{H}x)(b) \\ (\bm{H}x)(a)
\end{pmatrix}.
\end{align}

We impose common assumptions on the system coefficients, see e.g.\ \cite[Assumption 9.1.1]{jacob2012linear}.
\begin{assumption}\label{assumption}\
    \begin{itemize}
        \item[\rm (a)] The matrix $\bm{W}_B \in \mathbb{R}^{n \times 2n}$ has rank $n$ and fulfills $\bm{W}_B \bm{\Sigma} \bm{W}_B^* \ge 0$ with $\bm{\Sigma} = \begin{pmatrix}
            \bm{0}_n & \bm{1}_n\\
            \bm{1}_n & \bm{0}_n
        \end{pmatrix}$. 
        \item[\rm (b)] $\bm{H}\in C^1([a,b],\mathbb{R}^{n\times n})$, $\bm{H}(\zeta) \in \mathbb{R}^{n\times n}$ is symmetric for all $\zeta \in [a, b]$, \[m\bm{1}_n\leq \bm{H}(\zeta)\leq M\bm{1}_n, \quad \zeta\in [a, b]\] with constants $M,m > 0$ independent of $\zeta$; 
        \item[\rm (c)] The matrix $\bm{P_1}\in\mathbb{R}^{n\times n}$ is assumed to be symmetric and invertible and $\bm{P}_0\in\mathbb{R}^{n\times n}$ is skew-symmetric, i.e.\ $\bm{P}_0=-\bm{P}_0^\top$.
    \end{itemize}
\end{assumption}
\begin{remark}
    Item (a) of Assumption~\ref{assumption} can equivalently be written as
    \begin{equation}
        \tilde{\bm{W}}_B \begin{pmatrix}
            \bm{P}_1 & \bm{0}_n \\
            \bm{0}_n & -\bm{P}_1
        \end{pmatrix}\tilde{\bm{W}}_B^* \ge 0
    \end{equation}
    where $\tilde{\bm{W}}_B = \Biggl[ \frac{1}{\sqrt{2}} \, \begin{pmatrix}
        \bm{1}_n & \bm{1}_n \\
        \bm{P}_1 & -\bm{P}_1
    \end{pmatrix}\biggr]^{-1} \bm{W}_B$ in which case the boundary condition reads
    \[
    \tilde{\bm{W}}_B \begin{pmatrix}
        (\bm{H}x)(b) \\
        (\bm{H}x)(a)
    \end{pmatrix} = 0.
    \]
\end{remark}
It was shown in \cite[Chapter 7]{jacob2012linear} that Assumption~\ref{assumption} guarantees that the operator $\bm{A}$ given by \eqref{eq:def_A} generates a contraction semigroup $(\bm{T}(t))_{t\geq 0}$ on the Hilbert space 
\begin{align*}
    \mathcal{X}=L^2([a,b],\mathbb{K}^n), \ \langle f,g\rangle_{\mathcal{X}}=\frac{1}{2}\int_a^bg(\zeta)^*\bm{H}(\zeta)f(\zeta)\mathrm{d}\zeta. 
\end{align*}
In particular, the function $x(t)=\bm{T}(t)x_0$ is a~\emph{classical solution} of \eqref{pH} for $x_0=x(0)\in D(\bm{A})$ meaning that $x\in C^1([0,\infty),\mathcal{X})$, $x(t)\in D(\bm{A})$ and $x$ satisfies \eqref{pH} pointwise for all $t\geq 0$.
Moreover, for a given initial condition $x_0\in D(\bm{A})$, the classical  solution is unique.

A semigroup $(\bm{T}(t))_{t \ge 0}$ is called \emph{exponentially stable} if there are constants $C, \omega > 0$ such that
\[
\|\bm{T}(t)\| \le C e^{- \omega t} \qquad \text{for all } t \ge 0.
\] 

We recall a sufficient condition for exponential stability from 
\cite{jacob2012linear, ramirez2014exponential}, see also \cite{augner2013stability} for an extension to pH systems with higher-order spatial derivatives.
\begin{theorem}\label{thm: birgit thm}
    Consider a pH system~\eqref{pH} that satisfies Assumption~\ref{assumption} with $(\bm{T}(t))_{t \ge 0}$ being the semigroup generated by $\bm{A}$. If there exists $k > 0$ such that one of the following conditions hold  for all $ x\in D(\bm{A})$ 
    \begin{equation}\label{eqn: birgit cond}
        \langle x, \bm{A} x \rangle_{\mathcal{X}} + \langle  \bm{A}x, x\rangle_{\mathcal{X}} \le -k \|\bm{H}(b) x(b)\|^2,
    \end{equation}
    \begin{equation}\label{eqn: birgit cond 2}
        \langle x, \bm{A} x \rangle_{\mathcal{X}} + \langle  \bm{A}x, x\rangle_{\mathcal{X}} \le -k \|\bm{H}(a) x(a)\|^2,
    \end{equation}
   then $(\bm{T}(t))_{t\ge 0}$ is exponentially stable. 
\end{theorem}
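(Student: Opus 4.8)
\section*{Proof proposal}

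The plan is to reduce exponential stability to a finite-time observability estimate for the boundary data and then to establish that estimate by a local energy (multiplier) argument. The starting point is the pointwise energy balance obtained by differentiating the energy density $h(t,\zeta) := x(t,\zeta)^*\bm{H}(\zeta)x(t,\zeta)$ in time. Writing $e := \bm{H}x$ for the effort variable and using that $\bm{P}_1$ is symmetric and $\bm{P}_0$ skew-symmetric, the $\bm{P}_0$-contributions cancel and the $\bm{P}_1$-contributions combine into a spatial divergence, yielding the local conservation law
\begin{equation*}
\partial_t\big(x^*\bm{H}x\big) = \partial_\zeta\big((\bm{H}x)^*\bm{P}_1(\bm{H}x)\big).
\end{equation*}
Integrating over $[a,b]$ recovers exactly the left-hand side of \eqref{eqn: birgit cond}, so that \eqref{eqn: birgit cond} states precisely that the total energy is non-increasing with dissipation rate at least $k\|\bm{H}(b)x(b)\|^2$; integrating further over $t\in[0,\tau]$ gives
\begin{equation*}
\|x(\tau)\|_{\mathcal{X}}^2 \le \|x_0\|_{\mathcal{X}}^2 - k\int_0^\tau \|(\bm{H}x)(t,b)\|^2\,\mathrm{d}t .
\end{equation*}

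Since $(\bm{T}(t))_{t\ge0}$ is a contraction semigroup, exponential stability follows once I exhibit a single time $\tau>0$ and a constant $q<1$ with $\|\bm{T}(\tau)x_0\|_{\mathcal{X}}^2\le q\|x_0\|_{\mathcal{X}}^2$ for all $x_0\in D(\bm{A})$: then $\|\bm{T}(\tau)\|\le\sqrt{q}$, and the semigroup property gives $\|\bm{T}(n\tau)\|\le q^{n/2}$ and hence the desired decay. In view of the dissipation inequality above, this reduces to proving an \emph{observability inequality}
\begin{equation*}
\int_0^\tau \|(\bm{H}x)(t,b)\|^2\,\mathrm{d}t \ \ge\ c_{\mathrm{obs}}\,\|x_0\|_{\mathcal{X}}^2
\end{equation*}
for some $c_{\mathrm{obs}}>0$ and some sufficiently large $\tau$, after which one sets $q = 1 - k\,c_{\mathrm{obs}}\in[0,1)$ (the lower bound $q\ge0$ being automatic since $\|x(\tau)\|_{\mathcal{X}}^2\ge0$).

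The observability inequality is the crux of the argument, and it is here that the finite speed of propagation enters. Choosing $\gamma>0$ with $\gamma\bm{H}(\zeta)\ge\pm\bm{P}_1^{-1}$ for all $\zeta$, I would multiply the conservation law by the weight $(\zeta-a)$ and integrate over $[0,\tau]\times[a,b]$, integrating by parts in both variables. The boundary terms at $\zeta=b$ produce the observed flux $\int_0^\tau (\bm{H}x)(t,b)^*\bm{P}_1(\bm{H}x)(t,b)\,\mathrm{d}t$, the boundary condition encoded by $\bm{W}_B$ (through $\bm{W}_B\bm{\Sigma}\bm{W}_B^*\ge0$) controls the terms at $\zeta=a$, and the remaining space--time integral of the flux $(\bm{H}x)^*\bm{P}_1(\bm{H}x)$ is sign-indefinite because $\bm{P}_1$ need not be definite. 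The role of the condition $\gamma\bm{H}\ge\pm\bm{P}_1^{-1}$ is precisely to dominate this indefinite flux integral by the energy, which forces all of the initial energy to reach the right end-point within the round-trip travel time; this is why the horizon must satisfy $\tau\gtrsim 2\gamma(b-a)$, matching the threshold $\tau-\sigma>2\gamma(b-a)$ appearing in \eqref{eqn: better condition_intro}. The symmetric statement based on \eqref{eqn: birgit cond 2} is obtained by reflecting $\zeta\mapsto a+b-\zeta$, i.e.\ using the weight $(b-\zeta)$ and observing the flux at $\zeta=a$.

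The main obstacle is exactly the sign-indefiniteness of $\bm{P}_1$ in the interior flux term: unlike a scalar wave equation, the energy flux is not a definite quadratic form, so the naive weighted-energy estimate does not close, and one must combine the two boundary contributions with the $\gamma$-bound to extract a genuine lower bound on the observed flux. A further technical point is that these estimates are first derived for classical solutions $x_0\in D(\bm{A})$ and must then be extended to all of $\mathcal{X}$ by density. An alternative would be the frequency-domain route via the Gearhart--Pr\"uss theorem, establishing a uniform resolvent bound on the imaginary axis from \eqref{eqn: birgit cond}; however, as noted in the introduction this is typically harder to verify, so I would favour the energy/observability approach sketched above.
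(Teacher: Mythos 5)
Your overall skeleton---integrate \eqref{eqn: birgit cond} in time, reduce exponential stability to $\|\bm{T}(\tau)\|<1$ via the semigroup property, and close the loop with a boundary observability estimate based on the finite propagation speed encoded in $\gamma\bm{H}(\zeta)\ge\pm\bm{P}_1^{-1}$---is exactly the structure of the argument the paper relies on (Proposition~\ref{energy time theorem}, taken from Lemma~9.1.2 of \cite{jacob2012linear}, combined as in the proof of Theorem~\ref{stability basic}). However, the inequality you declare to be the crux,
\[
\int_0^\tau\|(\bm{H}x)(t,b)\|^2\,\mathrm{d}t\ \ge\ c_{\mathrm{obs}}\,\|x_0\|_{\mathcal{X}}^2,
\]
is \emph{false} for this class of systems, so the step on which your proof rests cannot be carried out. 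Concretely, take $n=1$, $[a,b]=[0,1]$, $\bm{P}_1=1$, $\bm{P}_0=0$, $\bm{H}\equiv 1$ and the boundary condition $(\bm{H}x)(b)=0$; this satisfies Assumption~\ref{assumption}, and \eqref{eqn: birgit cond} holds with any $k>0$ because its right-hand side vanishes identically on $D(\bm{A})$. The semigroup is the truncated left shift, $x(t,\zeta)=x_0(\zeta+t)$ for $\zeta+t\le 1$ and zero otherwise---nilpotent, hence exponentially stable, consistent with the theorem---but the observed trace $(\bm{H}x)(t,b)$ is identically zero while $\|x_0\|_{\mathcal{X}}\neq 0$, so no $c_{\mathrm{obs}}>0$ exists. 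The moral is that the boundary condition may annihilate the trace at the observed end, so the \emph{initial} energy can never be controlled by that trace alone.

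What is true, and what the paper's machinery provides, is observability of the \emph{final} state: $\|x(T)\|_{\mathcal{X}}^2\le c\int_S^T(x^*\bm{H}x)(t,b)\,\mathrm{d}t$, which is Proposition~\ref{energy time theorem}\,(a). With this substitution your argument closes immediately: using \eqref{eqn: birgit cond}, the bound $\bm{H}(b)\ge m\bm{1}_n$ (so that $\|\bm{H}(b)v\|^2\ge m\,v^*\bm{H}(b)v$), and $S\ge 0$, one gets
\[
\|x(T)\|_{\mathcal{X}}^2-\|x_0\|_{\mathcal{X}}^2\le -k\int_0^T\|\bm{H}(b)x(t,b)\|^2\,\mathrm{d}t\le -\tfrac{km}{c}\,\|x(T)\|_{\mathcal{X}}^2,
\]
hence $\|x(T)\|_{\mathcal{X}}^2\le\tfrac{c}{c+km}\|x_0\|_{\mathcal{X}}^2$ and $\|\bm{T}(T)\|<1$, which yields exponential stability. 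Note also how the paper proves the final-state estimate: not with your $(\zeta-a)$-multiplier---whose failure to close you yourself concede, since the flux $(\bm{H}x)^*\bm{P}_1(\bm{H}x)$ is indefinite---but with the sideways energy $\bm{F}(\zeta)=\int_{\sigma-\gamma(\zeta-a)}^{\tau+\gamma(\zeta-a)}(x^*\bm{H}x)(t,\zeta)\,\mathrm{d}t$, whose time window widens at speed $\gamma$; the condition $\gamma\bm{H}\ge\pm\bm{P}_1^{-1}$ makes the two endpoint terms in $\tfrac{\mathrm{d}}{\mathrm{d}\zeta}\bm{F}$ nonnegative, and a Gr\"onwall bound gives $\bm{F}(\zeta)\le e^{\kappa(b-\zeta)}\bm{F}(b)$ without ever invoking boundary conditions (Proposition~\ref{lem: side ways}). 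So your proposal identifies the right ingredients, but as written both the key statement (false as formulated) and its proposed proof (the unresolved indefiniteness of the multiplier estimate) contain genuine gaps.
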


\section{Energy methods for port-Hamiltonian systems}
\label{sec:energy}
The most basic energy estimate for equation~\eqref{pH} can be derived for a classical solution $x$ by studying the time evolution of the Hamiltonian as an energy function
\begin{equation}\label{energy basic}
    \begin{split}
        \frac{\mathrm{d}}{\mathrm{d}t} \| x \|_{\mathcal{X}}^2 &= \langle \bm{A} x, x \rangle_{\mathcal{X}} + \langle  x,\bm{A} x \rangle_{\mathcal{X}}= \frac{1}{2}\left[ (\bm{H}x)^* \bm{P_1} (\bm{H}x)  \right]_a^b= \frac{1}{2}(\bm{f}^*_\partial \bm{e}_\partial + \bm{e}^*_\partial \bm{f}_\partial)\leq 0.
    \end{split}
\end{equation}
In particular, we see that the classical solutions $x$ of \eqref{pH} are non-increasing in the weighted norm  
\begin{align}
\label{eq:decreasing_solutions}
    \| x(T) \|_{\mathcal{X}}^2\leq     \| x(S) \|_{\mathcal{X}}^2, \quad \text{$0\leq S\leq T<\infty$.}
\end{align}

\begin{figure}
\hspace{15ex}
\centering
\scalebox{1}{
\begin{tikzpicture}[x=1.cm,y=1cm,line cap=round,line join=round,thick]
  \def\a{0} \def\b{8}
  \def\ymin{0} \def\ymax{6}
  \pgfmathsetmacro{\sig}{2.2}
  \pgfmathsetmacro{\ta}{4.0}
  \pgfmathsetmacro{\gam}{0.8}
  \pgfmathsetmacro{\Sval}{\sig-\gam}
  \pgfmathsetmacro{\Tval}{\ta+\gam}
  \pgfmathsetmacro{\up}{\ta+0.2*\gam}
  \pgfmathsetmacro{\do}{\sig-0.2*\gam}
  \pgfmathsetmacro{\ze}{0.2*(\a+\b)}
  \pgfmathsetmacro{\midL}{0.5*(\sig+\ta)}
  \pgfmathsetmacro{\midR}{0.5*(\Sval+\Tval)}
  \def\tick{0.25}
  \def\leftpad{14pt}

  \draw (\a,\ymin) -- (\b,\ymin);
    \node[below] at (\ze,\ymin) {$\zeta$};
  \draw[line width=2pt] (\ze, \do) -- (\ze, \up) node[midway,right=6pt] {$\bm{F}(\zeta)=\!\!\!\!\!\! \int\limits_{\sigma - \gamma (\zeta - a)}^{\tau + \gamma (\zeta - a)} (x^* \bm{H} x)(t, \zeta) \ \mathrm{d}t$};
  \node[below] at (\a,\ymin) {$a$};
  \node[below] at (\b,\ymin) {$b$};
  \draw (\a,\ymin) -- (\a,\ymax) node[above] {};
  \draw (\b,\ymin) -- (\b,\ymax) node[above] {};
  \draw[line width=2pt] (\a, \sig) -- (\a, \ta);
    \draw[line width=2pt] (\b, \Sval) -- (\b, \Tval);

  \draw (\a-\tick,\sig) -- (\a+\tick,\sig)
        node[left=\leftpad] {$\sigma$};
  \draw (\a-\tick,\ta)  -- (\a+\tick,\ta)
        node[left=\leftpad] {$\tau$};
  \node[right=4pt] at (\a,\midL) {$\bm{F}(a)$};
  \node[left=4pt] at (\a,\midL) {$t$};
  \draw (\b-\tick,\Sval) -- (\b+\tick,\Sval) node[right=2pt] {\!\!\!$\begin{matrix}
      S=~~~~~~~~~~~\\ \sigma-\gamma(b-a)~~~
  \end{matrix}$};
  \draw (\b-\tick,\Tval) -- (\b+\tick,\Tval) node[right=0pt] {\!\!\!$\begin{matrix}
      T=~~~~~~~~~~~\\\tau+\gamma(b-a)~~~
  \end{matrix}$}; 
  
  \node[right=8pt] at (\b,\midR) {$\bm{F}(b)$};

  \draw[dotted] (\a,\Tval) -- (\b,\Tval)
    node[midway,above] {$\|x(T)\|^2_{\mathcal{X}}
      =\tfrac{1}{2}\int_{a}^b(x^*\bm{H} x)(T,\zeta) \,\mathrm{d}\zeta$};
  \draw[dotted] (\a,\Sval) -- (\b,\Sval)
    node[midway,below] {$\|x(S)\|^2_{\mathcal{X}}
      =\tfrac{1}{2}\int_{a}^b(x^*\bm{H} x)(S,\zeta) \,\mathrm{d}\zeta$};

  \draw[dashed] (\a,\sig) -- (\b,\Sval); 
  \draw[dashed] (\a,\ta)  -- (\b,\Tval);
\end{tikzpicture}
}
\caption{Visual sketch of the function $\bm{F}$. The function $\tilde{\bm{F}}$ can formally be obtained by flipping the sketch horizontally.}
\label{fig:proof}
\end{figure}
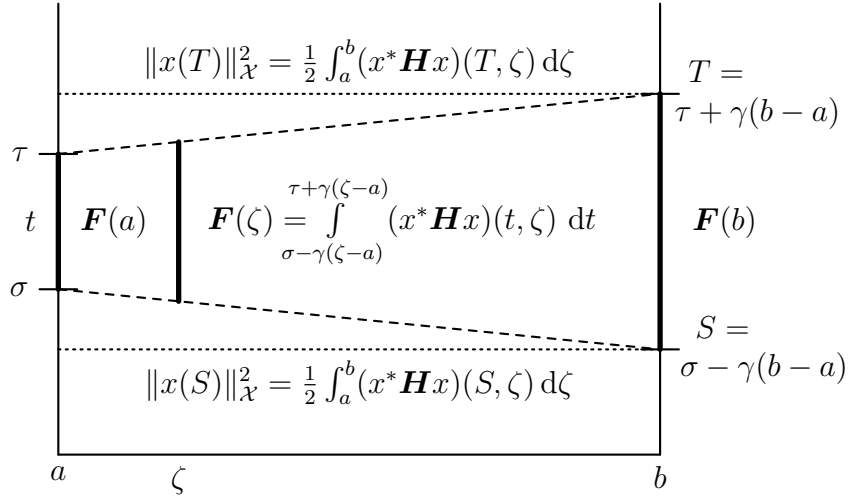

In the following, we obtain two energy methods and derive consequences of these.
We start by revisiting an energy estimate for \eqref{pH} that was used to prove Theorem~\ref{thm: birgit thm} in \cite{VilZLGM09}, see also \cite{cox1994rate,jacob2012linear}. To this end, we use the shorthand
\[
(x^* \bm{H} x)(t, \zeta):=x(t,\zeta)^* \bm{H}(\zeta) x(t, \zeta).
\]
Consider some constants $\sigma, \tau, \gamma >0 $ with $\sigma<\tau$ and a solution 
$x\in C^1([0,\infty), H^1([a, b], \mathbb{K}^{n}))$ of
\begin{equation}\label{eqn: no boundary}
    \frac{\mathrm{d}}{\mathrm{d}t} x(t, \zeta) = \left(\bm{P}_1 \frac{\partial}{\partial \zeta} + \bm{P}_0\right)(\bm{H}(\zeta) x(t, \zeta)).
\end{equation} Equation \eqref{eqn: no boundary} is understood to be fulfilled pointwise for almost every $\zeta \in [a, b]$.
We define
\begin{equation}\label{technical time}
    \begin{split}
        &\bm{F}
        (\zeta) = \int\limits_{\sigma - \gamma (\zeta - a)}^{\tau + \gamma (\zeta - a)} (x^* \bm{H} x)(t, \zeta) \ \mathrm{d}t,\quad\Tilde{\bm{F}}
        (\zeta) 
        = \int\limits_{\sigma + \gamma(\zeta-b) }^{\tau - \gamma (\zeta-b)} (x^* \bm{H} x)(t, b +a - \zeta) \ \mathrm{d}t
    \end{split}
\end{equation} 
for $\zeta\in[a,b]$.
In order to have positive integration bounds in \eqref{technical time}, we assume that  $\sigma\geq \gamma(b-a)$ holds. 
The function $\bm{F}$ is depicted in Figure~\ref{fig:proof}. The function $\tilde{\bm{F}}$ can be obtained by the change of variables $\tilde{\bm{F}}(\zeta)=\bm{F}(a+b-\zeta)$. 

The reason why we do not assume $x(t) = \bm{T}(t) x_0$ for $x_0 \in D(\bm{A)}$ in \eqref{technical time} is that we will apply the following lemma in subsequent arguments without the homogeneous boundary conditions assumed in~\eqref{pH}.
Note that $\bm{F}$ and $\Tilde{\bm{F}}$ were already considered in~\cite{jacob2012linear} in the case $\sigma - \gamma (\zeta - a) = 0$. In that work, $\tau > \sigma$ was fixed, see the proof of Lemma~9.1.2. 

\begin{proposition}
\label{lem: side ways}
Consider equation \eqref{eqn: no boundary} with $\bm{H}$ satisfying Assumption~\ref{assumption} (b) and (c) and assume that $\gamma > 0$ is large enough that \[
\gamma \bm{H}(\zeta) \ge \pm \bm{P}_1^{-1}, \quad \forall\, \zeta \in [a, b].
\]
Then there are constants $\kappa > 0$ such that the following properties hold. 
Given $\sigma < \tau$ satisfying $\sigma\geq \gamma(b-a)$ and $x \in C^1([0, \infty) , H^1([a, b], \mathbb{K}^{n}))$ being a solution of \eqref{eqn: no boundary}, the functions $\bm{F}$ and $\Tilde{\bm{F}}$ given by \eqref{technical time}  have the following properties:
\begin{itemize}
    \item[\rm (a)] $\bm{F}(\zeta)\geq0$ and $\Tilde{\bm{F}}(\zeta)\geq 0$ for all $\zeta\in[a,b]$;
    \item[\rm (b)] $\bm{F}$ and $ \Tilde{\bm{F}}$ satisfy for all  $\zeta\in[a,b]$
    \begin{equation}\label{energy time}
            \!\!\! \!\!\! \!\!\!
                \frac{\rm d}{{\rm d}\zeta} \bm{F}(\zeta) \ge - \kappa \bm{F}(\zeta),\quad 
                \frac{\rm d}{{\rm d}\zeta}\Tilde{\bm{F}}(\zeta) \le \kappa \Tilde{\bm{F}}(\zeta);
        \end{equation} 
    \item[\rm (c)] The function $\zeta \mapsto e^{\kappa \zeta} \bm{F}(\zeta)$ is monotonically increasing on $[a,b]$ whereas $\zeta \mapsto e^{-\kappa \zeta}\Tilde{\bm{F}}(\zeta)$ is monotonically decreasing on $[a, b]$.
\end{itemize}
\end{proposition}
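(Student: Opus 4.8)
The plan is to derive everything from a single pointwise energy balance obtained by differentiating the integrand of $\bm{F}$ in $\zeta$ and then eliminating the spatial derivative of $x$ via the evolution equation~\eqref{eqn: no boundary}. Property (a) is immediate: Assumption~\ref{assumption}(b) gives $\bm{H}(\zeta)\ge m\bm{1}_n>0$, so the integrand $(x^*\bm{H}x)(t,\zeta)\ge 0$, and since $\sigma<\tau$ and $\gamma(\zeta-a)\ge 0$ the lower integration limit never exceeds the upper one; hence $\bm{F}(\zeta)\ge 0$, and $\tilde{\bm{F}}(\zeta)=\bm{F}(a+b-\zeta)\ge 0$ as well.

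For (b), I would first apply the Leibniz rule to $\bm{F}$, with lower and upper limits $L=\sigma-\gamma(\zeta-a)$ and $U=\tau+\gamma(\zeta-a)$. This produces two boundary contributions carrying the factor $\gamma$ (both with a plus sign, since $L'=-\gamma$) together with the interior term $\int_{L}^{U}\partial_\zeta(x^*\bm{H}x)\,\mathrm{d}t$. The crucial step is to rewrite $\partial_\zeta(x^*\bm{H}x)$: using $\bm{P}_1\,\partial_\zeta(\bm{H}x)=\partial_t x-\bm{P}_0\bm{H}x$ from~\eqref{eqn: no boundary} together with the symmetry of $\bm{P}_1,\bm{H}$ and the skew-symmetry of $\bm{P}_0$, I expect the identity
\[
\partial_\zeta(x^*\bm{H}x)=\partial_t\!\left(x^*\bm{P}_1^{-1}x\right)-2\,\mathrm{Re}\!\left(x^*\bm{P}_1^{-1}\bm{P}_0\bm{H}x\right)-x^*(\partial_\zeta\bm{H})x.
\]
Integrating in $t$ over $[L,U]$ turns the first summand into $(x^*\bm{P}_1^{-1}x)(U,\zeta)-(x^*\bm{P}_1^{-1}x)(L,\zeta)$, which combine with the two Leibniz boundary terms into the quadratic forms $x^*(\gamma\bm{H}+\bm{P}_1^{-1})x|_{t=U}+x^*(\gamma\bm{H}-\bm{P}_1^{-1})x|_{t=L}$. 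Here the hypothesis $\gamma\bm{H}(\zeta)\ge\pm\bm{P}_1^{-1}$ makes both terms nonnegative, so they may be discarded, leaving a lower bound involving only the remaining interior integral.

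To close the estimate I would bound that remaining integrand by a multiple of the energy density: since $\bm{H}\in C^1([a,b])$, the matrices $\bm{P}_1^{-1}\bm{P}_0\bm{H}$ and $\partial_\zeta\bm{H}$ are uniformly bounded on the compact interval, and $\bm{H}\ge m\bm{1}_n$ gives $\|x\|^2\le m^{-1}(x^*\bm{H}x)$; hence the integrand is at most $\kappa\,(x^*\bm{H}x)$ for a constant $\kappa>0$ independent of $\zeta,\sigma,\tau$. Integrating yields $\frac{\mathrm{d}}{\mathrm{d}\zeta}\bm{F}(\zeta)\ge-\kappa\bm{F}(\zeta)$, and the inequality for $\tilde{\bm{F}}$ follows from $\tilde{\bm{F}}(\zeta)=\bm{F}(a+b-\zeta)$ via the chain rule. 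Part (c) is then a direct consequence of (b): the derivative of $e^{\kappa\zeta}\bm{F}(\zeta)$ equals $e^{\kappa\zeta}(\bm{F}'+\kappa\bm{F})\ge 0$, and similarly $e^{-\kappa\zeta}\tilde{\bm{F}}(\zeta)$ has nonpositive derivative.

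I expect the main obstacle to be the balance identity itself---correctly eliminating $\partial_\zeta(\bm{H}x)$ through the PDE and tracking all transpose/conjugate and sign conventions so that the two boundary quadratic forms assemble into precisely $\gamma\bm{H}\pm\bm{P}_1^{-1}$. This is the single place where the assumption $\gamma\bm{H}\ge\pm\bm{P}_1^{-1}$ enters, and getting the polarity right is exactly what makes the $\gamma$-boundary terms and the integrated $\partial_t$-term reinforce rather than cancel. A secondary, routine point is justifying the Leibniz differentiation and the $t$-integration of $\partial_t(x^*\bm{P}_1^{-1}x)$ under the stated regularity $x\in C^1([0,\infty),H^1([a,b],\mathbb{K}^n))$, which poses no real difficulty once the algebraic identity is established.
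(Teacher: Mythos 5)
Your proposal is correct and follows essentially the same route as the paper's appendix proof: Leibniz differentiation of $\bm{F}$, elimination of $\partial_\zeta(\bm{H}x)$ via \eqref{eqn: no boundary} to produce $\partial_t\left(x^*\bm{P}_1^{-1}x\right)$ plus zero-order terms, absorption of the resulting endpoint evaluations into the nonnegative forms $x^*(\gamma\bm{H}\pm\bm{P}_1^{-1})x$, and a bound of the zero-order terms by $\kappa\, x^*\bm{H}x$ using boundedness of the coefficients and coercivity of $\bm{H}$ (your version of the pointwise identity, with the term $-2\,\mathrm{Re}\left(x^*\bm{P}_1^{-1}\bm{P}_0\bm{H}x\right)$, is in fact the correct one). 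The only cosmetic difference is that you deduce the $\Tilde{\bm{F}}$ inequality by the chain rule from $\Tilde{\bm{F}}(\zeta)=\bm{F}(a+b-\zeta)$, whereas the paper's appendix argues via the reflected solution $\Tilde{x}(t,\zeta)=x(t,a+b-\zeta)$ solving an equation of the same form; these are equivalent.
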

Since Proposition~\ref{lem: side ways} is only a technical modification of the first step in the proof of Lemma~9.1.2 in \cite{jacob2012linear}, the proof of it will be given in the appendix for completeness.

The following proposition is an extension of Lemma~9.1.2 in \cite{jacob2012linear} where for $x_0 \in D(\bm{A})$ and $x(t) = \bm{T}(t) x_0$ the following bounds were obtained 
\begin{align*}
    &\| x(T) \|_{\mathcal{X}}^2 \le c \int\limits_S^T  \|\bm{H}(\delta)\, x(t, \delta)\|^2 \ \mathrm{d}t,
\end{align*}
with $\delta\in\{a,b\}$, $S=0$ and for some $T > 0$ independently of $x$.

In the following proposition, we generalize Lemma~9.1.2 from \cite{jacob2012linear}, but we use the squared norm $v^* \bm{H}(\zeta)v$ instead of $\|\bm{H}(\zeta) v\|^2$, $v \in \mathbb{K}^n$, $\zeta \in [a, b]$ because that is easier to use in the applications considered later. This is not a significant modification, since all norms on $\mathbb{K}^n$ are equivalent.

\begin{proposition}\label{energy time theorem}
    Consider a pH system~\eqref{pH} that satisfies Assumption~\ref{assumption} and let $(\bm{T}(t))_{t \ge 0}$ be the contraction semigroup generated by $\bm{A}$. Let $\sigma,\tau,\gamma>0$ such that $\gamma (b-a) \le \sigma < \tau $ holds with 
    $\gamma \bm{H}(\zeta) \ge \pm \bm{P}_1^{-1}$, for all $\zeta \in [a, b]$, and set \[0\leq S := \sigma - \gamma (b-a)\leq  \tau + \gamma(b-a)=: T.\] Then there are constants $c, d > 0$ such that the following conditions hold.
    \begin{enumerate} 
        \item[\rm (a)]  For all $x_0\in D(\bm{A})$ and $\delta\in\{a,b\}$ the solution $x(t) = \bm{T}(t) x_0$ of \eqref{pH} fulfills
\begin{equation}\label{energy time simple conclusion}
             \begin{split}
                 &\| x(T) \|_{\mathcal{X}}^2 \le c \int\limits_S^T  (x^*\bm{H} x)(t, \delta) \ \mathrm{d}t,
             \end{split}
        \end{equation}
        \item[\rm (b)] For all $x_0\in D(\bm{A})$ and $\delta\in\{a,b\}$ the solution $x(t) = \bm{T}(t) x_0$ of \eqref{pH} fulfills
        \begin{equation}
        \label{simple conclusion +1}
            \begin{split}
    &\int\limits_\sigma^\tau  (x^*\bm{H} x)(t, \delta) \mathrm{d}t\le d \ \|x(S)\|_{\mathcal{X}}^2.
            \end{split}
        \end{equation}
    \end{enumerate}
\end{proposition}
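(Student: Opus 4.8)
The plan is to work throughout with the pointwise energy density $e(t,\zeta):=(x^*\bm H x)(t,\zeta)\ge 0$, which satisfies $\tfrac12\int_a^b e(t,\zeta)\,\mathrm d\zeta=\|x(t)\|_{\mathcal X}^2$ and, by \eqref{eq:decreasing_solutions}, is such that $t\mapsto\|x(t)\|_{\mathcal X}^2$ is non-increasing. For $x_0\in D(\bm A)$ the classical solution $x(t)=\bm T(t)x_0$ solves \eqref{eqn: no boundary} and (possibly after the density argument mentioned below) has the regularity required to apply Proposition~\ref{lem: side ways}, so the function $\bm F$ from \eqref{technical time} is at my disposal together with the growth estimate \eqref{energy time} and the monotonicity of $\zeta\mapsto e^{\kappa\zeta}\bm F(\zeta)$. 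I would first prove the two endpoints that $\bm F$ sees directly, namely \eqref{energy time simple conclusion} for $\delta=b$ and \eqref{simple conclusion +1} for $\delta=a$, since $\bm F(b)=\int_S^T e(t,b)\,\mathrm dt$ and $\bm F(a)=\int_\sigma^\tau e(t,a)\,\mathrm dt$.

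The key step is a geometric sandwich for the trapezoid $R=\{(t,\zeta): a\le\zeta\le b,\ \sigma-\gamma(\zeta-a)\le t\le\tau+\gamma(\zeta-a)\}$, over which Fubini gives $\int_a^b\bm F(\zeta)\,\mathrm d\zeta=\iint_R e\,\mathrm dt\,\mathrm d\zeta$. Because $[\sigma,\tau]\subseteq[\sigma-\gamma(\zeta-a),\tau+\gamma(\zeta-a)]\subseteq[S,T]$ for every $\zeta\in[a,b]$, one has the inclusions $[\sigma,\tau]\times[a,b]\subseteq R\subseteq[S,T]\times[a,b]$. Inserting $\tfrac12\int_a^b e=\|x\|_{\mathcal X}^2$ and using monotonicity of the norm then yields the two-sided bound $2(\tau-\sigma)\|x(T)\|_{\mathcal X}^2\le\int_a^b\bm F(\zeta)\,\mathrm d\zeta\le 2(T-S)\|x(S)\|_{\mathcal X}^2$, where the left inequality uses $\|x(\tau)\|_{\mathcal X}^2\ge\|x(T)\|_{\mathcal X}^2$ and the right one uses $\|x(t)\|_{\mathcal X}^2\le\|x(S)\|_{\mathcal X}^2$ on $[S,T]$.

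Next I would collapse the area integral $\int_a^b\bm F$ to a single boundary value by means of Proposition~\ref{lem: side ways}(c): monotonicity of $\zeta\mapsto e^{\kappa\zeta}\bm F(\zeta)$ gives $\bm F(\zeta)\le e^{\kappa(b-\zeta)}\bm F(b)$ and $\bm F(a)\le e^{\kappa(\zeta-a)}\bm F(\zeta)$, whence, after integrating in $\zeta$, $\int_a^b\bm F\le (b-a)e^{\kappa(b-a)}\bm F(b)$ and $(b-a)\bm F(a)\le e^{\kappa(b-a)}\int_a^b\bm F$. Combining the first with the left sandwich inequality proves \eqref{energy time simple conclusion} for $\delta=b$ with $c=\tfrac{(b-a)e^{\kappa(b-a)}}{2(\tau-\sigma)}$, and combining the second with the right sandwich inequality proves \eqref{simple conclusion +1} for $\delta=a$ with $d=\tfrac{2(T-S)e^{\kappa(b-a)}}{b-a}$; crucially, both constants depend only on $a,b,\sigma,\tau,\gamma,\kappa$ and not on $x_0$.

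The remaining two endpoints, \eqref{energy time simple conclusion} for $\delta=a$ and \eqref{simple conclusion +1} for $\delta=b$, I would obtain from the spatial reflection $\zeta\mapsto a+b-\zeta$, since these traces are not accessible through $\bm F$ itself. The function $y(t,\zeta):=x(t,a+b-\zeta)$ solves a system of the same form with $\bm P_1$ replaced by $-\bm P_1$ and $\bm H(\zeta)$ by $\bm H(a+b-\zeta)$; this system again satisfies Assumption~\ref{assumption}(b),(c) and the bound $\gamma\bm H\ge\pm\bm P_1^{-1}$ (the set $\{\pm\bm P_1^{-1}\}$ is unchanged under the sign flip), while the reflected energy norm equals $\|x(t)\|_{\mathcal X}$ and is therefore still non-increasing. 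Running the two already-proved cases for $y$ and undoing the reflection—using $(y^*\bm H(a+b-\cdot)y)(t,b)=e(t,a)$ and $(y^*\bm H(a+b-\cdot)y)(t,a)=e(t,b)$—turns its $\delta=b$ statement into the $\delta=a$ statement for $x$ and its $\delta=a$ statement into the $\delta=b$ statement for $x$; taking the larger of the two constants gives $c,d$ valid at both endpoints. The main obstacle I anticipate is precisely the simultaneous two-sided control in the sandwich step: one must view the trapezoid in \eqref{technical time} as nested between the narrow rectangle $[\sigma,\tau]\times[a,b]$ and the wide rectangle $[S,T]\times[a,b]$, and then exploit the exponential monotonicity of Proposition~\ref{lem: side ways}(c) to reduce the area integral to a single boundary trace. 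A secondary technical point is justifying that $x(t)=\bm T(t)x_0$ has the $C^1([0,\infty),H^1)$ regularity assumed in Proposition~\ref{lem: side ways}, which I would handle by first taking $x_0$ in a core such as $D(\bm A^2)$ and then extending \eqref{energy time simple conclusion}--\eqref{simple conclusion +1} to all $x_0\in D(\bm A)$ by density, both sides being continuous in $x_0$ in the graph norm.
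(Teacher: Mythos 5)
Your proposal is correct and follows essentially the same route as the paper's proof: the rectangle--trapezoid--rectangle sandwich combined with Fubini and the norm monotonicity \eqref{eq:decreasing_solutions} is exactly the paper's chain of estimates, and your collapse of $\int_a^b \bm{F}$ to $\bm{F}(b)$ (resp.\ $\bm{F}(a)$) via the exponential monotonicity of Proposition~\ref{lem: side ways}(c) matches the paper's use of $e^{\kappa(b-\zeta)}\bm{F}(b)$ and $e^{\kappa(a-\zeta)}\bm{F}(a)$. Your explicit reflection $\zeta \mapsto a+b-\zeta$ for the remaining two endpoints is precisely what the paper packages into $\tilde{\bm{F}}$, and your added density remark on the regularity needed for Proposition~\ref{lem: side ways} is a careful touch the paper glosses over, not a deviation.
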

\begin{proof}
    \rm (a) was already proven in the proof of Lemma $9.1.2$ of \cite{jacob2012linear}.
    We repeat this argument for completeness.
    \begin{align*}
\!\!\!\!\!\!2\|x(T)\|_{\mathcal{X}}^2&=\int\limits_a^b(x^*\bm{H}x)(T, \zeta) \, \mathrm{d}\zeta \\
        &\stackrel{\eqref{eq:decreasing_solutions}}{\le} \frac{1}{\tau - \sigma}\int\limits_{\sigma}^\tau \int\limits_a^b (x^*\bm{H}x)(t,\zeta)  \ \mathrm{d}\zeta\ \mathrm{d}t \\
        &\le \frac{1}{\tau-\sigma} \int\limits_a^b\underbrace{\int\limits_{\sigma - \gamma (\zeta-a)}^{\tau + \gamma (\zeta-a)} (x^*\bm{H}x)(t,\zeta)\,\mathrm{d}t}_{=\bm{F}(\zeta)} \, \mathrm{d}\zeta\\
        &\stackrel{\text{Prop.~\ref{lem: side ways}}}{\le}\frac{1}{\tau- \sigma}\int\limits_a^b e^{\kappa(b-\zeta)}\bm{F}(b) \, \mathrm{d} \zeta \\
        &= \frac{1}{\tau- \sigma}\int\limits_a^b e^{\kappa(b-\zeta)} \, \mathrm{d} \zeta \int\limits_S^T (x^*\bm{H}x)(t, b)\, \mathrm{d} t.
    \end{align*}
    Rearranging terms leads to \eqref{energy time simple conclusion}.
    The bound for $\delta = a$ can be proven in a similar way using the properties of $\Tilde{\bm{F}}$.
    
    To prove (ii), estimate \begin{align*} 
    \!\!\!\!
        2\|x(S)\|_{\mathcal{X}}^2&=\int\limits_a^b(x^*\bm{H}x)(S, \zeta) \, \mathrm{d}\zeta \\&\stackrel{\eqref{eq:decreasing_solutions}}{\ge} \frac{1}{T - S}\int\limits_{S}^T \int\limits_a^b (x^*\bm{H}x)(t,\zeta)  \ \mathrm{d}\zeta\ \mathrm{d}t \\
        &\ge \frac{1}{T-S} \int\limits_a^b\underbrace{\int\limits_{\sigma - \gamma (\zeta-a)}^{\tau + \gamma (\zeta-a)} (x^*\bm{H}x)(t,\zeta)\,\mathrm{d}t}_{=\bm{F}(\zeta)} \, \mathrm{d}\zeta\\ 
       &\stackrel{\text{Prop.~\ref{lem: side ways}}}{\ge} \tfrac{1}{T- S}\int\limits_a^b e^{\kappa(a-\zeta)}\bm{F}(a) \, \mathrm{d} \zeta \\
        &= \frac{1}{T-S}\int\limits_a^b e^{\kappa(a-\zeta)}\, \mathrm{d} \zeta \int\limits_\sigma^\tau (x^*\bm{H}x)(t, a)\, \mathrm{d} t.
    \end{align*}
    Rearranging terms leads to \eqref{simple conclusion +1}. 
The other bound for $\delta = b$ can be proven in a similar way using the properties of $\Tilde{\bm{F}}$.
\end{proof}

\begin{figure}
\centering
\scalebox{1}{
\begin{tikzpicture}[x=1cm,y=1cm,line cap=round,line join=round,thick]

  \def\a{0} \def\b{8}
  \def\ymin{0} \def\ymax{6}
  \pgfmathsetmacro{\eps}{2}
    \pgfmathsetmacro{\tatwo}{5}
    \pgfmathsetmacro{\taone}{2.5}
    \pgfmathsetmacro{\alp}{3}
    \pgfmathsetmacro{\bet}{5}
    \pgfmathsetmacro{\zet}{4}

  \def\tick{0.25}
  \def\leftpad{14pt}

  \draw (\a,\ymin) -- (\b,\ymin) node[midway,below=6pt] {$\zeta$};

  \node[below] at (\a,\ymin) {$a$};
  \node[below] at (\b,\ymin) {$b$};
  \draw (\a,\ymin) -- (\a,\ymax);
  \draw (\b,\ymin) -- (\b,\ymax);
  \draw[dashed] (\a+\eps,0) -- (\a,\tatwo+1);
  \draw[dashed] (\b-\eps,0) -- (\b,\tatwo+1);
  \node[left=1pt] at (\a,\tatwo+1) {$\frac{\varepsilon}{\gamma}$};
  \node[left=1pt] at (\a,\taone+1) {$\frac{\beta-\alpha}{2\, \gamma}$};
  \node[left=1pt] at (\a,\tatwo/4) {$t$};

    \node[below] at (\a+\eps,0) {$a+\varepsilon$};
    \node[below] at (\b-\eps,0) {$b-\varepsilon$};

  \draw[dashed] (\alp,0) -- (\zet,\taone+1);
  \draw[dashed] (\bet,0) -- (\zet,\taone+1);
        \draw[loosely dotted] (\a,\tatwo+1) -- (\b,\tatwo+1);
        \draw[loosely dotted] (\a,\taone+1) -- (\zet,\taone+1);

\draw[line width=2pt] (\a,0) -- (\a+\eps,0) node[midway,above] {$\tilde{\bm{G}}_1(0)$};

\draw[line width=2pt] (\b-\eps,0) -- (\b,0) node[midway,above] {$\tilde{\bm{G}}_2(0)$};

\draw[line width=2pt] (\alp,0) -- (\bet,0) node[midway,above] {$\bm{G}(0)$};

  \draw[line width=2pt]  (\b,\tatwo/4) -- (\b-\eps*3.9/5,\tatwo/4) node[midway,above] {$\tilde{\bm{G}}_2(t)$};
  \draw[line width=2pt]  (\a,\tatwo/4) -- (\a+\eps*3.9/5,\tatwo/4) node[midway,above] {$\tilde{\bm{G}}_1(t)$};
  \draw[line width=2pt] (0.62*\alp + 0.38*\zet,0.5*\taone) -- (0.62*\bet + 0.38*\zet,0.5*\taone) node[midway,above=-2pt] {$\bm{G}(t)$};
  \node[below] at (\alp,0) {$\alpha$};
  \node[below] at (\bet,0) {$\beta$};
\end{tikzpicture}
}
\caption{Visual sketch of the functions $\bm{G}$ and $\tilde{\bm{G}}=\tilde{\bm{G}}_1+\tilde{\bm{G}}_2$.}
\label{fig: proof 2}
\end{figure}
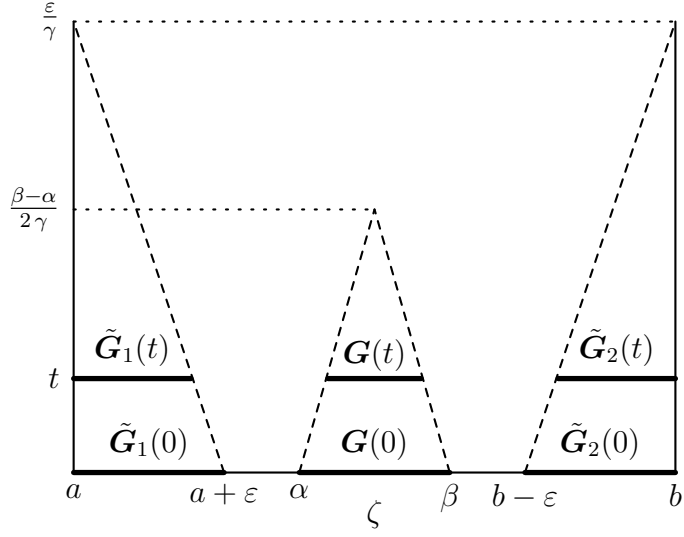

We will now study another local energy method for equation \eqref{pH} which is based on considering the energy for fixed times and integrating over a certain spatial domain. 

Suppose that $x_0 \in D(\bm{A})$ and $x(t) = \bm{T}(t) x_0$ and $a < \alpha < \beta < b$, $\varepsilon > 0$,
define \begin{equation}\label{eqn: G middle}
    \bm{G}(t) =\!\!\!\!\!\int\limits_{\alpha + \gamma t}^{\beta - \gamma t} \!\!(x^* \bm{H} x)(t, \zeta) \, \mathrm{d} \zeta, \,\,\, 0 \le t \le \frac{\beta - \alpha}{2 \gamma}
\end{equation}
We furthermore define \begin{equation}\label{eqn: G bound}
        \Tilde{\bm{G}}(t) = \!\!\!\!\!\!\int\limits_{a}^{a + \varepsilon - \gamma t} \!\!\!\!(x^* \bm{H} x)(t, \zeta) \, \mathrm{d} \zeta + \!\!\!\!\int\limits_{b - \varepsilon + \gamma t}^b \!\!(x^*\bm{H}x)(t, \zeta) \, \mathrm{d} \zeta, \quad 0\le t \le \frac{\varepsilon}{\gamma}. 
\end{equation}
The functions $\bm{G}$ and $\tilde{\bm{G}}$ are depicted in Figure~\ref{fig: proof 2}.

\begin{proposition}
\label{energy spatial theorem}
Consider a pH system~\eqref{pH} that satisfies Assumption~\ref{assumption} with $(\bm{T}(t))_{t \ge 0}$ being the semigroup generated by $\bm{A}$.
Then there is $\gamma > 0$ such that given any $x_0 \in D(\bm{A})$, $\bm{G}$ and $\Tilde{\bm{G}}$ defined by \eqref{eqn: G middle} and \eqref{eqn: G bound}  have the following properties.
\begin{itemize}
    \item[\rm (a)] $\bm{G}(t) \ge 0, \ 0 \leq t \le \frac{\beta - \alpha}{2 \gamma}$ and  $\Tilde{\bm{G}}(t) \ge 0, \ 0\leq t\le\frac{\varepsilon}{\gamma}$;
    \item[\rm (b)]
        $\tfrac{\mathrm{d}}{\mathrm{d}t} \bm{G}(t) \le 0,\ 0\leq t\le \frac{\beta - \alpha}{2 \gamma}$ and 
        $\tfrac{\mathrm{d}}{\mathrm{d}t} \Tilde{\bm{G}}(t) \le 0, \ 0\leq t\le \frac{\varepsilon}{\gamma}$;
    \item[\rm (c)]
        $\bm{G}(s) \geq \bm{G}(t), \ 0\leq s\leq t\leq \frac{\beta - \alpha}{2 \gamma}$ and 
        $\Tilde{\bm{G}}(s) \geq \Tilde{\bm{G}}(t), \ 0\leq s\leq t\leq \frac{\varepsilon}{\gamma}.$
        \end{itemize}
\end{proposition}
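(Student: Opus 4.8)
The plan is to reduce all three claims to one local energy identity and then differentiate the two integrals with the Leibniz rule. Part (a) is immediate: Assumption~\ref{assumption}(b) gives $\bm{H}(\zeta)\ge m\bm{1}_n>0$, so the integrand $(x^*\bm{H}x)(t,\zeta)\ge 0$, and in the stated $t$-ranges the limits are correctly ordered (e.g.\ $\alpha+\gamma t\le\beta-\gamma t$ for $t\le\tfrac{\beta-\alpha}{2\gamma}$), whence $\bm{G},\tilde{\bm{G}}\ge 0$. Part (c) is just the integrated form of (b). So the whole content is the sign of the derivatives in (b), and the tool for it is the identity, valid for a classical solution of~\eqref{eqn: no boundary} with $e:=\bm{H}x$,
\[
\partial_t\,(x^*\bm{H}x)=\partial_\zeta\big((\bm{H}x)^*\bm{P}_1(\bm{H}x)\big).
\]
I would derive this by inserting $\partial_t x=\bm{P}_1\partial_\zeta e+\bm{P}_0 e$ into $\partial_t(x^*\bm{H}x)=(\partial_t x)^*e+e^*(\partial_t x)$ and using Assumption~\ref{assumption}(c): the two $\bm{P}_0$-terms cancel by skew-symmetry and the two $\bm{P}_1$-terms reassemble into $\partial_\zeta(e^*\bm{P}_1 e)$ by symmetry of $\bm{P}_1$. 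This is the local version of the energy balance~\eqref{energy basic}.

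For $\bm{G}$ I would apply the Leibniz rule and substitute the identity to convert the interior integral into boundary fluxes, obtaining
\[
\tfrac{\mathrm{d}}{\mathrm{d}t}\bm{G}(t)=\big[-\gamma\,x^*\bm{H}x+e^*\bm{P}_1 e\big](t,\beta-\gamma t)+\big[-\gamma\,x^*\bm{H}x-e^*\bm{P}_1 e\big](t,\alpha+\gamma t).
\]
Using $x^*\bm{H}x=e^*\bm{H}^{-1}e$, the two brackets equal $e^*(\bm{P}_1-\gamma\bm{H}^{-1})e$ and $-e^*(\bm{P}_1+\gamma\bm{H}^{-1})e$. It therefore suffices to fix $\gamma$ so large that $-\gamma\bm{H}^{-1}(\zeta)\le\bm{P}_1\le\gamma\bm{H}^{-1}(\zeta)$ for all $\zeta$; this is possible since $\bm{H}(\zeta)\le M\bm{1}_n$ forces $\bm{H}^{-1}(\zeta)\ge M^{-1}\bm{1}_n$, so any $\gamma\ge M\|\bm{P}_1\|$ works. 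With such a $\gamma$ both brackets are $\le 0$, giving $\tfrac{\mathrm{d}}{\mathrm{d}t}\bm{G}\le 0$. Geometrically the slanted edges are characteristics of slope $\pm\gamma$ bounding the propagation speed, so no energy enters the shrinking triangle; note this uses only the PDE and not the boundary conditions, which is exactly why $\bm{G}$ is placed in the interior $a<\alpha<\beta<b$.

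The computation for $\tilde{\bm{G}}$ is identical at the two moving edges $a+\varepsilon-\gamma t$ and $b-\varepsilon+\gamma t$ (again $\le 0$ for the same $\gamma$), but the fixed endpoints $a$ and $b$ now contribute the extra flux $(e^*\bm{P}_1 e)(t,b)-(e^*\bm{P}_1 e)(t,a)=[(\bm{H}x)^*\bm{P}_1(\bm{H}x)]_a^b$, which is precisely the boundary term of~\eqref{energy basic} and is $\le 0$ because $x(t)\in D(\bm{A})$ satisfies the dissipative boundary condition of Assumption~\ref{assumption}(a). Summing the (nonpositive) edge terms with this (nonpositive) boundary term yields $\tfrac{\mathrm{d}}{\mathrm{d}t}\tilde{\bm{G}}\le 0$, and (c) follows by integrating in $t$.

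The hard part will be the analytic justification rather than the algebra: differentiating under the integral with moving limits and evaluating $e^*\bm{P}_1 e$ pointwise at the moving edges needs sufficient joint regularity of $(t,\zeta)\mapsto x(t,\zeta)$. In one spatial dimension this is tractable because $\bm{H}x(t)\in H^1([a,b])\hookrightarrow C([a,b])$, so $e$ is continuous in $\zeta$ and the traces make sense. To be fully rigorous I would either approximate $x_0\in D(\bm{A})$ by smoother data and pass to the limit, or, more cleanly, integrate the identity $\partial_t(x^*\bm{H}x)=\partial_\zeta(e^*\bm{P}_1 e)$ over the space–time trapezoid between times $s$ and $t$ and apply the divergence theorem; this reproduces exactly the edge and boundary fluxes above and delivers the monotonicity in (c) directly, without first having to assert pointwise differentiability.
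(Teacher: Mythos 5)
Your proposal is correct and follows essentially the same route as the paper's proof: Leibniz differentiation of the moving-limit integrals, the local energy identity $\partial_t(x^*\bm{H}x)=\partial_\zeta\bigl((\bm{H}x)^*\bm{P}_1(\bm{H}x)\bigr)$ to turn the interior integral into edge fluxes, a choice of $\gamma$ making the slanted-edge terms nonpositive (your condition $\pm\bm{P}_1\le\gamma\bm{H}^{-1}$ is the congruence-equivalent form of the paper's $\pm\bm{H}\bm{P}_1\bm{H}\le\gamma\bm{H}$), and the dissipative boundary term $[(\bm{H}x)^*\bm{P}_1(\bm{H}x)]_a^b\le 0$ for $\tilde{\bm{G}}$. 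Your added remarks on regularity and the explicit bound $\gamma\ge M\|\bm{P}_1\|$ go slightly beyond what the paper records but do not change the argument.
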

    \begin{proof}
    Fix $\gamma > 0$ that will be choosen later.
    The assertion \rm (a) follows from the nonnegativity of the integrands. 
       To prove the first estimate in (b), we calculate \begin{equation}\label{calc}
               \begin{split}
                    \frac{\mathrm{d}}{\mathrm{d}t} \bm{G}(t) = -\gamma (x^*\bm{H}x)(t, \beta - \gamma t) - \gamma (x^*\bm{H}x)(t, \alpha + \gamma t) + \int\limits_{\alpha + \gamma t}^{\beta - \gamma t} \frac{\mathrm{d}}{\mathrm{d}t} (x^*\bm{H}x)(t, \zeta) \, \mathrm{d} \zeta.
               \end{split}
            \end{equation}
            Evaluation of the integral term leads to 
            \begin{align*}
            &\int\limits_{\alpha + \gamma t}^{\beta - \gamma t} \frac{\mathrm{d}}{\mathrm{d}t} (x^*\bm{H}x)(t, \zeta) \, \mathrm{d} \zeta = \int\limits_{\alpha + \gamma t}^{\beta - \gamma t} \frac{\partial}{\partial \zeta} (x^*\bm{H}\bm{P_1} \bm{H}x)(t, \zeta) \, \mathrm{d} \zeta = \left[(x^*\bm{H}\bm{P_1}\bm{H}x)(t,\zeta)\right]_{\zeta=\alpha + \gamma t}^{\zeta=\beta - \gamma t}
            \end{align*}
            Invoking \eqref{calc} yields \begin{align*}
                \frac{\mathrm{d}}{\mathrm{d}t} \bm{G}(t) &= (x^*(\bm{H}\bm{P_1}\bm{H} - \gamma \bm{H})x)(t, \beta - \gamma t) + (x^*(-\bm{H}\bm{P_1}\bm{H} - \gamma \bm{H})x)(t, \alpha + \gamma t)\leq 0,
            \end{align*}
            when choosing $\gamma$ large enough so that 
            \begin{align}
\label{choice of gamma for G}                
            \pm \bm{H}(\zeta)\bm{P_1}\bm{H}(\zeta) - \gamma \bm{H}(\zeta) \le 0, \, \text{ $\zeta \in [a, b]$.}
            \end{align}
            To prove the second estimate, we calculate with $\gamma>0$ chosen such that \eqref{choice of gamma for G} holds,  
            \begin{align*}
                \frac{\mathrm{d}}{\mathrm{d}t} \Tilde{\bm{G}}(t)&= 
                     -\gamma (x^*\bm{H}x)(t,a+\varepsilon - \gamma t)- \gamma (x^*\bm{H}x)(t,b-\varepsilon + \gamma t)\\ &\quad + \int\limits_a^{a +\varepsilon-\gamma t}\frac{\partial}{\partial \zeta} (x^*\bm{H}\bm{P_1}\bm{H}x)(t, \zeta) \, \mathrm{d} \zeta + \int\limits_{b -\varepsilon + \gamma t}^{b}\frac{\partial}{\partial \zeta} (x^*\bm{H}\bm{P_1}\bm{H}x)(t, \zeta) \, \mathrm{d} \zeta \\
                    &= (x^*(\bm{H}\bm{P_1}\bm{H} - \gamma \bm{H})x)(t,a + \varepsilon - \gamma t) \\& \quad+ (x^*(-\bm{H}\bm{P_1}\bm{H} - \gamma \bm{H})x)(t,b - \varepsilon + \gamma t) \\
                    &\quad+ \left[(x^*\bm{H}\bm{P_1}\bm{H}x)(t, \zeta)\right]_{\zeta=a}^{\zeta=b}\\ &\leq 0,
            \end{align*}
            where we used in the last step \eqref{choice of gamma for G}, and 
 $\left[(x^*\bm{H}\bm{P_1}\bm{H}x)(t, \zeta)\right]_{\zeta=a}^{\zeta=b} \le 0$.  This proves the assertion (b). The property (c) immediately follows from (b).
    \end{proof}

\section{Applications of energy methods to long- and short-time behavior}
\label{sec:long_and_short}

As a first application of the local energy method with respect to the spatial variable, we obtain a characterization for exponential stability of the pH system~\eqref{pH} which is a generalization of Theorem~\ref{thm: birgit thm} from \cite{jacob2012linear,Vill07}. 
\begin{theorem}
\label{stability basic}
Consider a pH system~\eqref{pH} which satisfies Assumption~\ref{assumption} and let $(\bm{T}(t))_{t \ge 0}$ be  the contraction semigroup generated by $\bm{A}$. Furthermore assume that $\gamma > 0$ is large enough that
    $\gamma \bm{H}(\zeta) \ge \pm \bm{P}_1^{-1}$, for all  $\zeta \in [a, b]$. Then the  following assertions are equivalent: 
    \begin{itemize}   
        \item[\rm (i)] The semigroup $(\bm{T}(t))_{t \ge 0}$ is exponentially stable.
        \item[\rm (ii)] There exist $0 < \sigma < \tau < T$ satisfying the inequality $\tau - \sigma > 2\gamma (b-a)$ and $k > 0$ such that
        \begin{equation}\label{eqn: better condition}
        \|x(T)\|_{\mathcal{X}}^2 - \|x_0\|^2_{\mathcal{X}} \le -k \int\limits_\sigma^\tau(x^* \bm{H}x)(t, b) \, \mathrm{d} t
        \end{equation}
        for all $x_0 \in D(\bm{A)}$ and $x(t) = \bm{T}(t) x_0$.
        \item[\rm (iii)] There exist $0 < \sigma < \tau < T$ satisfying the inequality  $\tau - \sigma > 2\gamma (b-a)$ and $k > 0$ such that
        \begin{equation}\label{eqn: better condition 2}
           \|x(T)\|_{\mathcal{X}}^2 - \|x_0\|^2_{\mathcal{X}}  \le -k \int\limits_\sigma^\tau(x^* \bm{H}x)(t, a) \, \mathrm{d} t
        \end{equation}
        for all $x_0 \in D(\bm{A)}$ and $x(t) = \bm{T}(t) x_0$.
    \end{itemize}
\end{theorem}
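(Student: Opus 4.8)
The plan is to establish the two equivalences $\mathrm{(i)}\Leftrightarrow\mathrm{(ii)}$ and $\mathrm{(i)}\Leftrightarrow\mathrm{(iii)}$, where the second follows from the first after exchanging the roles of the end-points $a$ and $b$ (equivalently, replacing $\bm{F}$ by $\tilde{\bm{F}}$ throughout); I therefore focus on $\mathrm{(i)}\Leftrightarrow\mathrm{(ii)}$. Both directions are carried entirely by Proposition~\ref{energy time theorem}, and the whole argument amounts to matching the free parameters $\sigma,\tau,T$ of the theorem with the parameters of that proposition, the dictionary being governed exactly by the shift $\gamma(b-a)$.

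For $\mathrm{(ii)}\Rightarrow\mathrm{(i)}$ I would start from given $0<\sigma<\tau<T$ with $\tau-\sigma>2\gamma(b-a)$ and apply Proposition~\ref{energy time theorem}(a) to the inward-shifted window with parameters $\sigma_0:=\sigma+\gamma(b-a)$ and $\tau_0:=\tau-\gamma(b-a)$. The hypothesis $\tau-\sigma>2\gamma(b-a)$ is precisely what makes $\sigma_0<\tau_0$, and $\sigma_0\ge\gamma(b-a)$ since $\sigma>0$, so the proposition applies with associated end-points $S_0=\sigma$ and $T_0=\tau$ and yields $c>0$ with $\|x(\tau)\|_{\mathcal{X}}^2\le c\int_\sigma^\tau(x^*\bm{H}x)(t,b)\,\mathrm{d}t$. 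Feeding the assumed estimate \eqref{eqn: better condition} into this, and then using the monotonicity \eqref{eq:decreasing_solutions} in the form $\|x(T)\|_{\mathcal{X}}^2\le\|x(\tau)\|_{\mathcal{X}}^2$ (valid because $\tau<T$), gives
\[
\|x(T)\|_{\mathcal{X}}^2-\|x_0\|_{\mathcal{X}}^2\le -\tfrac{k}{c}\,\|x(\tau)\|_{\mathcal{X}}^2\le -\tfrac{k}{c}\,\|x(T)\|_{\mathcal{X}}^2,
\]
hence $\|x(T)\|_{\mathcal{X}}^2\le q\,\|x_0\|_{\mathcal{X}}^2$ with $q:=\frac{c}{c+k}<1$. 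As this holds for all $x_0\in D(\bm{A})$ and $\bm{T}(T)$ is bounded with $D(\bm{A})$ dense, one obtains $\|\bm{T}(T)\|\le\sqrt{q}$, and the contraction bound $\|\bm{T}(r)\|\le 1$ for $r\in[0,T]$ together with the semigroup law upgrades this to $\|\bm{T}(nT+r)\|\le q^{n/2}$, i.e.\ exponential stability.

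For $\mathrm{(i)}\Rightarrow\mathrm{(ii)}$ I would fix any admissible pair, for instance $\sigma=\gamma(b-a)$ and $\tau=4\gamma(b-a)$, so that $\gamma(b-a)\le\sigma<\tau$ and $\tau-\sigma>2\gamma(b-a)$. Proposition~\ref{energy time theorem}(b), combined with \eqref{eq:decreasing_solutions}, then gives $d>0$ with $\int_\sigma^\tau(x^*\bm{H}x)(t,b)\,\mathrm{d}t\le d\,\|x(S)\|_{\mathcal{X}}^2\le d\,\|x_0\|_{\mathcal{X}}^2$ for $S=\sigma-\gamma(b-a)$. On the other hand, exponential stability yields $\|x(T)\|_{\mathcal{X}}^2\le C^2e^{-2\omega T}\|x_0\|_{\mathcal{X}}^2$, so choosing $T>\tau$ large enough that $C^2e^{-2\omega T}<1$ produces $c'>0$ with $\|x(T)\|_{\mathcal{X}}^2-\|x_0\|_{\mathcal{X}}^2\le -c'\|x_0\|_{\mathcal{X}}^2$. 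Taking $k:=c'/d$ closes the inequality, because then $-k\int_\sigma^\tau(x^*\bm{H}x)(t,b)\,\mathrm{d}t\ge -c'\|x_0\|_{\mathcal{X}}^2\ge\|x(T)\|_{\mathcal{X}}^2-\|x_0\|_{\mathcal{X}}^2$. The equivalence $\mathrm{(i)}\Leftrightarrow\mathrm{(iii)}$ is obtained by the same two arguments with the $\delta=a$ versions of Proposition~\ref{energy time theorem}(a),(b).

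I expect the only real obstacle to be the parameter matching in $\mathrm{(ii)}\Rightarrow\mathrm{(i)}$: one has to recognise that the seemingly arbitrary threshold $\tau-\sigma>2\gamma(b-a)$ is exactly the slack needed for the shifted window $[\sigma_0,\tau_0]$ to be non-empty, so that Proposition~\ref{energy time theorem}(a) returns precisely $S_0=\sigma$ and $T_0=\tau$; the subsequent replacement of $\|x(\tau)\|_{\mathcal{X}}^2$ by $\|x(T)\|_{\mathcal{X}}^2$ via monotonicity is the small but decisive step that converts the boundary-energy bound into a genuine contraction factor $q<1$.
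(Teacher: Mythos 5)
Your proposal is correct and follows essentially the same route as the paper's proof: both directions rest on Proposition~\ref{energy time theorem}, with the shifted parameters $\sigma+\gamma(b-a)$, $\tau-\gamma(b-a)$ in part (a) giving the contraction $\|\bm{T}(T)\|<1$, and part (b) plus $\|\bm{T}(T)\|<1$ for large $T$ giving the converse. The only differences are cosmetic: you spell out the density argument and the passage from $\|\bm{T}(T)\|<1$ to exponential stability (which the paper delegates to Engel--Nagel), and you fix $\tau=4\gamma(b-a)$ before choosing $T$ rather than setting $\tau=T-\gamma(b-a)$ afterwards, both of which are equally valid parameter choices.
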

    \begin{proof}
Fix $\delta=a$ and suppose that \eqref{eqn: better condition} holds. 
We would like to get an estimate from above for $\|x(\tau)\|^2_{\mathcal{X}}$. Proposition~\ref{energy time theorem} \rm (a) provides such an estimate. However, note that $S, T$ in that proposition play the role of the $\sigma, \tau$ fixed here. We thus need to use Proposition~\ref{energy time theorem} \rm(a) with $\sigma', \tau'$ satisfying that $\sigma = \sigma' - \gamma (b-a)$, $\tau = \tau' + \gamma (b-a)$ plugged in for $\sigma$ and $\tau$ there.
Note that $\tau' := \tau - \gamma (b-a)$ and $\sigma' := \sigma+\gamma(b-a)$ satisfy $\tau' - \sigma' > 0$, $\sigma' \ge \gamma (b-a)$.
We can thus plug $\sigma'$ for $\sigma$ and $\tau'$ for $\tau$ into
item (a) of Proposition~\ref{energy time theorem} to infer
$\|x(\tau)\|^2_{\mathcal{X}} \le c \int\limits_\sigma^\tau (x^*\bm{H}x)(t, \delta)\, dt$ for some $c > 0$. But then
            \begin{align*}
                \|x(T)\|_{\mathcal{X}}^2 - \|x_0\|^2_{\mathcal{X}} &\le -k \int\limits_\sigma^\tau (x^* \bm{H} x)(t, \delta) \, \mathrm{d} t\le -\frac{k}{c} \|x(\tau)\|_{\mathcal{X}}^2 \stackrel{\eqref{eq:decreasing_solutions}}{\le} -\frac{k}{c} \| x (T) \|^2_{\mathcal{X}}.
            \end{align*}
                It follows that $\|x(T)\|_{\mathcal{X}}^2 \le \frac{c}{c+k}\|x_0\|_{\mathcal{X}}^2$ so that $\|\bm{T}(T)\| < 1$ which implies that the semigroup generated by $\bm{A}$ is exponentially stable, see Proposition~1.2 in Chapter~5 of \cite{engel2000one}.
                
                Conversely, whenever $\bm{A}$ generates an exponentially stable semigroup, then $\|\bm{T}(T)\| < 1$ holds for~$T$ sufficiently large, which implies
                \begin{align}
                    \label{converse_estimate}
            \|x(T)\|_{\mathcal{X}}^2 - \|x_0\|_{\mathcal{X}}^2 \le -\kappa \|x_0\|^2_{\mathcal{X}}
        \end{align}
            for a constant $\kappa > 0$. By applying the contractivity estimate~\eqref{eq:decreasing_solutions}, the inequality \eqref{converse_estimate} remains true for all times larger than $T$. Therefore, we may assume that $T > 4\gamma(b-a)$ holds. 
            Using item (b) of Proposition~\ref{energy time theorem}, we obtain $ \int\limits_\sigma^\tau (x^* \bm{H} x)(t, \delta) \, \mathrm{d} t \le d \, \|x_0\|_{\mathcal{X}}^2$ with $\sigma = \gamma (b-a)$, $\tau = T - \gamma(b-a)$ so that $\tau - \sigma > 2\gamma(b-a)$.
            It follows that \begin{align*}
                \|x(T)\|_{\mathcal{X}}^2 - \|x_0\|^2_{\mathcal{X}} \le -\frac{\kappa}{d} \int\limits_\sigma^\tau (x^* \bm{H} x)(t, \delta) \, \mathrm{d} t.
            \end{align*}
            The claim follows with $k = \frac{\kappa}{d}$. This proves the equivalence of (i) and (iii). The proof of equivalence (i) $\Longleftrightarrow$ (ii) can be done analogously by fixing $\delta=b$ and repeating the above arguments.
    \end{proof}

A recent topic in the literature on control  theory is the short-time behavior of dissipative systems, see for example \cite{achleitner2022hypocoercivity, achleitner2025hypocoercivity}. In this context, the proof of Theorem~\ref{stability basic} yields $\|\bm{T}(T)\| < 1$ for some $T > 0$, which means that it does not provide any insight into the short-time behavior $\|\bm{T}(t)\|, \ t \le \tau$ for
 some $\tau > 0$ small.
 
In \cite{achleitner2025hypocoercivity}, the concept of \emph{hypocoercivity} was used to infer that if $\bm{A}$ is  dissipative and bounded, 
then the semigroup $(\bm{T}(t))_{t\geq 0}$ generated by $\bm{A}$ is exponentially stable if and only if there are $a \in 2 \mathbb{N}_0 + 1$ and $c_1, c_2, t_0 > 0$ such that for all $t \le t_0$
\[
1 - c_1t^a + \mathcal{O}(t^{a+1}) \le \|\bm{T}(t)\|^2 \le 1 - c_2t^a+ \mathcal{O}(t^{a+1}).
\]

The next proposition shows that pH systems~\eqref{pH} do not obey this type of short-time decay.
\begin{proposition}
            Consider a pH system~\eqref{pH} which satisfies Assumption~\ref{assumption} and let $(\bm{T}(t))_{t \ge 0}$ be  the contraction semigroup generated by $\bm{A}$. Then there exists $\tau > 0$ such that $\|\bm{T}(t)\| = 1$ for $0\leq t \le \tau$.
\end{proposition}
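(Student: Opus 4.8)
The plan is to exploit the finite speed of propagation encoded in Proposition~\ref{energy spatial theorem}. Since $(\bm{T}(t))_{t\ge 0}$ is a contraction semigroup, we always have $\|\bm{T}(t)\|\le 1$; hence it suffices to exhibit a single nonzero initial datum whose energy is \emph{exactly} conserved on a time interval $[0,\tau]$, which forces $\|\bm{T}(t)\|\ge 1$ and thus equality. The mechanism is that dissipation in \eqref{energy basic} occurs only through the boundary term, so if the solution stays zero near both endpoints for a short time, no energy can be lost.

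First I would fix $\varepsilon\in(0,\tfrac{b-a}{2})$ and take $\gamma>0$ as in Proposition~\ref{energy spatial theorem}, setting $\tau:=\varepsilon/\gamma$. Next I would choose $x_0\in D(\bm{A})$, $x_0\neq 0$, supported in $[a+\varepsilon,b-\varepsilon]$; for instance $x_0=\phi\,v$ with $\phi\in C_c^\infty((a+\varepsilon,b-\varepsilon))$ and a fixed $v\in\mathbb{R}^n\setminus\{0\}$. Then $\bm{H}x_0\in H^1$ and $(\bm{H}x_0)(a)=(\bm{H}x_0)(b)=0$, so that $\bm{e}_\partial=\bm{f}_\partial=0$ and the boundary condition in \eqref{eq:def_A} holds trivially, confirming $x_0\in D(\bm{A})$.

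The core step uses the boundary-localized energy $\tilde{\bm{G}}$ from \eqref{eqn: G bound} for the solution $x(t)=\bm{T}(t)x_0$. Because $x_0$ vanishes on $[a,a+\varepsilon]\cup[b-\varepsilon,b]$, we have $\tilde{\bm{G}}(0)=0$, and the monotonicity in Proposition~\ref{energy spatial theorem}~(c) together with the nonnegativity in (a) forces $\tilde{\bm{G}}(t)=0$ for all $0\le t\le\tau$. As $\bm{H}(\zeta)\ge m\bm{1}_n>0$, the vanishing of the nonnegative integrand gives $x(t,\zeta)=0$ for a.e.\ $\zeta\in[a,a+\varepsilon-\gamma t]\cup[b-\varepsilon+\gamma t,b]$; since $\bm{H}x(t,\cdot)\in H^1\hookrightarrow C([a,b],\mathbb{K}^n)$ is continuous, this yields $(\bm{H}x)(t,a)=(\bm{H}x)(t,b)=0$ for every $0\le t<\tau$. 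Plugging this into the basic energy balance \eqref{energy basic}, the boundary term $\tfrac12[(\bm{H}x)^*\bm{P_1}(\bm{H}x)]_a^b$ vanishes, so $\tfrac{\mathrm{d}}{\mathrm{d}t}\|x(t)\|_{\mathcal{X}}^2=0$ and hence $\|x(t)\|_{\mathcal{X}}=\|x_0\|_{\mathcal{X}}$ on $[0,\tau)$, extending to $t=\tau$ by continuity of $t\mapsto\|x(t)\|_{\mathcal{X}}^2$. Therefore $\|\bm{T}(t)\|\ge \|\bm{T}(t)x_0\|_{\mathcal{X}}/\|x_0\|_{\mathcal{X}}=1$, and combined with contractivity we conclude $\|\bm{T}(t)\|=1$ for $0\le t\le\tau$.

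The main obstacle is the passage from the integral identity $\tilde{\bm{G}}(t)=0$ to the pointwise boundary vanishing $(\bm{H}x)(t,a)=(\bm{H}x)(t,b)=0$: one must keep the integration interval $[a,a+\varepsilon-\gamma t]$ of positive length (which is exactly why I restrict to $t<\tau$ and recover the endpoint $t=\tau$ by continuity) and invoke the Sobolev embedding $H^1\hookrightarrow C$ to upgrade the a.e.-vanishing of $x(t,\cdot)$ to genuine vanishing of $\bm{H}x$ up to the endpoints. Everything else is a direct application of the already-established properties of $\tilde{\bm{G}}$ and of the energy identity \eqref{energy basic}.
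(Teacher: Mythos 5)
Your proof is correct and follows essentially the same route as the paper: both exploit the boundary-localized energy $\tilde{\bm{G}}$ from Proposition~\ref{energy spatial theorem} for an initial datum supported away from the endpoints, conclude that the boundary values vanish for short times, and then use the energy balance \eqref{energy basic} to get norm conservation, hence $\|\bm{T}(t)\|=1$. Your treatment is in fact slightly more careful than the paper's on two minor points (explicit verification that $x_0\in D(\bm{A})$, and the degenerate-interval issue at $t=\tau$ handled via continuity), but the underlying argument is identical.
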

\begin{proof}\
Fix $0 < \varepsilon < \frac{1}{4}(b-a)$ and $\gamma > 0$ so that Proposition~\ref{energy spatial theorem} can be applied.
We choose an (arbitrary) initial value $x_0 \in D(\bm{A})$ that has support in the interval $(a + \frac{1}{4}(b-a), b - \frac{1}{4}(b-a))$. Then the solution of \eqref{pH} is given by  $x(t) = \bm{T}(t) x_0$ and it fulfills
\begin{align*}    
\tilde{\bm{G}}(0)&=\int\limits_a^{a + \varepsilon } (x^*\bm{H}x)(0, \zeta) \, \mathrm{d}\zeta + \int\limits_{b -\varepsilon }^{b} (x^*\bm{H}x)(0, \zeta)\mathrm{d}\zeta=\int\limits_{a}^{a + \varepsilon} x_0^*\bm{H}(\zeta)x_0 \, \mathrm{d} \zeta + \int\limits_{b -\varepsilon}^b x_0^*\bm{H}(\zeta)x_0\, \mathrm{d} \zeta =0.
\end{align*}
In combination with  Proposition~\ref{energy spatial theorem}~(a) and (c), we obtain
\[
0\leq \tilde{\bm{G}}(t)\leq \tilde{\bm{G}}(0)=0,\quad 0<t\leq\frac{\varepsilon}{\gamma},
\]
and therefore this leads to \begin{align*}
           \tilde{\bm{G}}(t)&= \!\!\!\!\!\!\int\limits_a^{a + \varepsilon - \gamma t} \!\!\!(x^*\bm{H}x)(t, \zeta) \, \mathrm{d}\zeta + \!\!\!\int\limits_{b -\varepsilon + \gamma t}^{b} \!\!\!(x^*\bm{H}x)(t, \zeta)\mathrm{d}\zeta= 0
        \end{align*}
        for $0 < t < \frac{\varepsilon}{\gamma}$. From the coercivity of $\bm{H}$ it follows that $x(t, a) = x(t, b) = 0$ holds for all $0 < t < \frac{\varepsilon}{\gamma}$, and consequently \begin{align*}
            \|x(t)\|_{\mathcal{X}}^2 - \|x_0\|_{\mathcal{X}}^2 &= \int\limits_0^t 
            \Biggl[ (x^*\bm{H}\bm{P}_1 \bm{H}x)(t, \zeta) \biggr]_{\zeta = a}^{\zeta = b}
            \, \mathrm{d}s= 0,
        \end{align*}
        which implies $\|\bm{T}(t)x_0\|_{\mathcal{X}} =\|x(t)\|_{\mathcal{X}}= \|x_0\|_{\mathcal{X}}$ for all $x_0$ so that $\|\bm{T}(t)\| = 1$ if $0 \le t < \frac{\varepsilon}{\gamma}$.
\end{proof}
\section{Verification of exponential stability for vibrating string networks}
\label{sec:vibrating_strings}
We will now illustrate the use of Theorem~\ref{stability basic} by considering vibrating string networks for which pH models of the form \eqref{pH} were provided in \cite[Chapter~7]{jacob2012linear}. To this end, we start with a simple examples where Theorem~\ref{thm: birgit thm} fails while Theorem~\ref{stability basic} can be applied.
\begin{figure}[!htbp]
  \centering
  \includegraphics[width=0.8\linewidth]{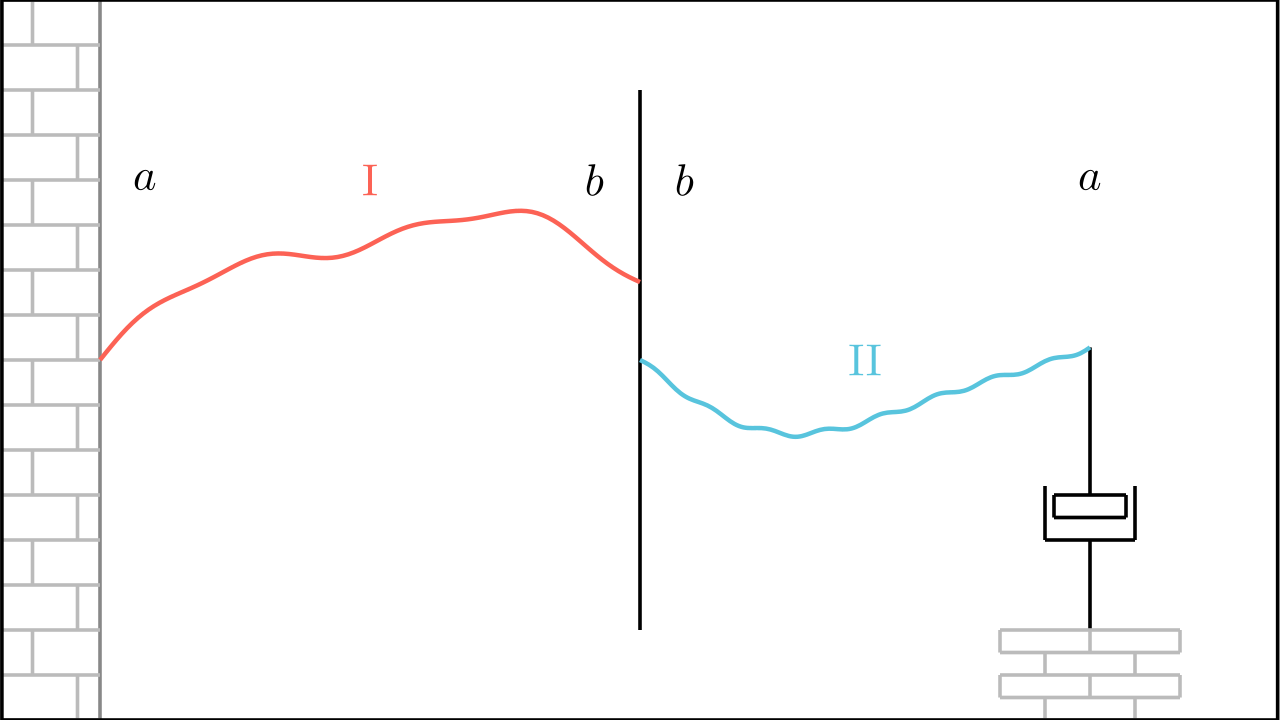}
  \caption{A simple example where Theorem~\ref{stability basic} can be applied, but Theorem~\ref{thm: birgit thm} not.}
  \label{fig:Ex1}
\end{figure}

\subsection{Two Interconnected strings}
\label{exm: two strings}
    Consider two vibrating strings connected as in Figure~\ref{fig:Ex1}. The first string has a fixed end and is connected to the second string through a mass-less bar.
    The free end of the second string is connected to a damper. 
    Mathematically, this means that the pH system~\eqref{pH} is given by
    \begin{align*}
        &\bm{P}_1 = \begin{pmatrix}
            0 & 1 & 0 & 0 \\
            1 & 0 & 0 & 0 \\
            0 & 0 & 0 & 1 \\
            0 & 0 & 1 & 0
        \end{pmatrix} , \  \bm{P}_0 = \bm{0}_4, \ \bm{H}(\zeta) = \begin{pmatrix}
            \frac{1}{\rho_{\RNum{1}}(\zeta)} & 0 & 0 & 0 \\
            0 & T_{\RNum{1}}(\zeta) & 0 & 0 \\
            0 & 0 & \frac{1}{\rho_{\RNum{2}}(\zeta)} & 0 \\
            0 & 0 & 0 & T_{\RNum{2}}(\zeta)
        \end{pmatrix}.
    \end{align*}
    We furthermore assume that $\bm{H}$ satisfies item \rm (b) of Assumption~\ref{assumption} and note that item \rm (c) is automatically satisfied.
    
    The boundary conditions are given by 
    \begin{equation}\label{eqn: bound double string}
       \begin{aligned}
            v_{\RNum{1}}(a) &= 0,& v_{\RNum{1}}(b) &= v_{\RNum{2}}(b)\\ F_{\RNum{1}}(b) + F_{\RNum{2}}(b) &= 0, & F_{\RNum{2}}(a) &= -\sigma v_{\RNum{2}}(a),
       \end{aligned}
    \end{equation}
    where $\sigma > 0$ is a damping coefficient and \begin{align*}
        v_{\RNum{1}}(a) &= x_1(a),&v_{\RNum{2}}(a) &= x_3(a) \\ 
                v_{\RNum{1}}(b) &= x_1(b),&v_{\RNum{2}}(b) &= x_3(b) \\
        F_{\RNum{1}}(b) &= T_{\RNum{1}}(b)x_2(b), &F_{\RNum{2}}(b) &= T_{\RNum{2}}(b)x_4(b)\\
        F_{\RNum{1}}(a) &= -T_{\RNum{1}}(a)x_2(a), &F_{\RNum{2}}(a) &= -T_{\RNum{2}}(a)x_4(a).
    \end{align*}

    The state $x_1(\zeta)$ represents the velocity of the first string $v_{\RNum{1}}(\zeta)$ at the spatial coordinate $\zeta$ whereas $x_3(\zeta)$ represents the velocity of the second string $v_{\RNum{2}}(\zeta)$ at that point. The states $x_2(\zeta)$ and $x_4(\zeta)$ represent the strains $\epsilon_{\RNum{1}}(\zeta), \epsilon_{\RNum{2}}(\zeta)$ of the first and second string at $\zeta$, respectively.  

    Following \cite{jacob2012linear}, we formulate the boundary conditions in terms of $x(a)$ and $x(b)$ instead of using the boundary flow and effort $\bm{f}_\partial$ and $\bm{e}_\partial$.
    The domain of $\bm{A}$ is given by \[D(\bm{A}) := \{ x \in H^1((a, b); \mathbb{K}^n) \, |\, x \text{ satisfies \eqref{eqn: bound double string}}\}.\]
    We calculate \begin{align}\label{eqn: two string simple}
        \langle  x, \bm{A}x\rangle_{\mathcal{X}} + \langle  \bm{A}x, x\rangle_{\mathcal{X}} = -\frac{\sigma}{\rho_{\RNum{2}}(a)}|v_{\RNum{2}}(a)|^2,
    \end{align}
    which directly yields that $\bm{A}$ generates a contraction semigroup by Theorem~7.2.4 in \cite{jacob2012linear}. By the same theorem item \rm (a) of Assumption~\ref{assumption} is also satisfied with $\bm{W}_B$ representing \eqref{eqn: bound double string} in terms of the boundary efforts and flows given by \eqref{eq:boundaryflow_effort}, so that Assumption~\ref{assumption} holds. 
    Moreover, the sufficient condition \eqref{eqn: birgit cond} from Theorem~\ref{thm: birgit thm} fails, because 
    and \begin{align*}
        \|\bm{H}(\delta)x(\delta)\|^2 &=\frac{|v_{\RNum{1}}(\delta)|^2}{\rho_{\RNum{1}}(\delta)^2}+ |F_{\RNum{1}}(\delta)|^2 + \frac{|v_{\RNum{2}}(\delta)|^2}{\rho_{\RNum{2}}(\delta)^2} +|F_{\RNum{2}}(\delta)|^2
    \end{align*}
    for $\delta \in \{a, b\}$ Indeed, this follows since for each $z_1, z_2 \in \mathbb{K}^n$ with $\tilde{\bm{W}}_B\begin{pmatrix}
        z_1 \\ z_2
    \end{pmatrix}=0$ there is some $x \in D(\bm{A})$ where $x(b) = z_1$ and $x(a) =z_2$.
    We can consequently choose $x \in D(\bm{A})$ with $x(b) \neq 0$ and $x(a) = 0$ which implies that \[
    \langle  x, \bm{A}x\rangle_{\mathcal{X}} + \langle  \bm{A}x, x\rangle_{\mathcal{X}} \le -k \|\bm{H}(b) x(t, b)\|^2
    \]
    cannot hold, because the left side is $0$ and the right strictly negative. By a similar argument, it can be seen that the  condition \eqref{eqn: birgit cond 2} cannot by fulfilled by choosing $x \in D(\bm{A})$ with
    \[v_{\RNum{1}}(a), F_{\RNum{1}}(a) \neq 0,\] \[v_{\RNum{2}}(a)=F_{\RNum{2}}(a) = 0,\,x(b)=0\]
    which implies that \[
    \langle  x, \bm{A}x\rangle_{\mathcal{X}} + \langle  \bm{A}x, x\rangle_{\mathcal{X}} \le -k \|\bm{H}(a) x(t, a)\|^2
    \]
    cannot hold, because the left side is $0$ and the right strictly negative.

In the remainder, we verify exponential stability using Theorem~\ref{stability basic}.
    To achieve that, first observe that the boundary conditions \eqref{eqn: bound double string} can be used to solve for $x_{\RNum{1}}(b) := \begin{pmatrix}
        x_1(b)\\
        x_2(b)
    \end{pmatrix}$ in terms of $x_{\RNum{2}}(b) := \begin{pmatrix}
        x_3(b)\\
        x_4(b)
    \end{pmatrix}$ where $x = \begin{pmatrix}
        x_{\RNum{1}}\\
        x_{\RNum{2}}
    \end{pmatrix} \in D(\bm{A})$. More precisely, the boundary conditions imply that $x_{\RNum{1}}(b) = \bm{S}x_{\RNum{2}}(b)$ with $\bm{S} = \begin{pmatrix}
        1 & 0 \\
        0 & - \frac{T_{\RNum{2}}(b)}{T_{\RNum{1}}(b)}
    \end{pmatrix}$ for all $x \in D(\bm{A})$.
    It then follows that
    \begin{equation}\label{eqn: cauchy}\begin{split}
    (x_{\RNum{1}}^*\bm{H}_{\RNum{1}}x_{\RNum{1}})(t, b) &= (x_{\RNum{2}}^*\bm{S}^*\bm{H}_{\RNum{1}}\bm{S}x_{\RNum{2}})(t, b)\le C_1\, (x_{\RNum{2}}^*\bm{H}_{\RNum{2}}x_{\RNum{2}})(t, b)
    \end{split}
    \end{equation}
    for a constant $C_1>0$ large enough that $\bm{S}^* \bm{H}_{\RNum{1}}(b) \bm{S} \le C_1\, \bm{H}_{\RNum{2}}(b)$.
    
    Now fix $\gamma > 0$ large enough that \[
    \gamma \bm{H}(\zeta) \ge \pm \bm{P}_1^{-1}, \quad \forall \, \zeta \in [a, b].
    \] We can apply Proposition~\ref{lem: side ways} for the two differential equations \begin{align*}
        &\frac{\mathrm{d}}{\mathrm{d}t} x_{j}(t, \zeta) = \begin{pmatrix}
            0 & 1 \\
            1 & 0
        \end{pmatrix}\frac{\partial}{\partial \zeta} (\bm{H}_{j}(\zeta) x_{j}(t, \zeta)),\quad 
    \end{align*}
    for $j=\RNum{1},\RNum{2}$, where \begin{align*}
        \bm{H}_{j}(\zeta) = \begin{pmatrix}
        \frac{1}{\rho_j(\zeta)} & 0 \\
        0 & T_{j}(\zeta)
    \end{pmatrix},\quad j=\RNum{1},\RNum{2},
    \end{align*} 
    and $\zeta \in [a, b]$ and $x_{\RNum{1}}, x_{\RNum{2}} \in C^1(\mathbb{R}_+, H^1([a, b], \mathbb{C}^{2}))$. It then follows from Proposition~\ref{lem: side ways} that there are constants $C_2, C_3 > 0$  such that
    \begin{equation}\label{eqn: two string side}
        \begin{split}
        \int\limits_\sigma^\tau (x_{\RNum{1}}^*\bm{H}_{\RNum{1}}x_{\RNum{1}})(t, a) \, \mathrm{d} t  &\le C_2 \int\limits_{\sigma - \gamma (b-a)}^{\tau + \gamma (b-a)} (x_{\RNum{1}}^*\bm{H}_{\RNum{1}}x_{\RNum{1}})(t, b) \, \mathrm{d} t, \\
        \int\limits_{\sigma - \gamma (b-a)}^{\tau + \gamma (b-a)} (x_{\RNum{2}}^*\bm{H}_{\RNum{2}}x_{\RNum{2}})(t, b) \, \mathrm{d} t &\le C_3 \int\limits_{0}^{T} (x_{\RNum{2}}^*\bm{H}_{\RNum{2}}x_{\RNum{2}})(t, a) \, \mathrm{d} t
        \end{split}
    \end{equation}
    for fixed \[\sigma = 2\, \gamma (b-a), \quad \tau > \sigma + 2 \gamma (b-a)\] and \[S=\sigma - 2\gamma (b-a) = 0, \quad T= \tau + 2\gamma(b-a).\]
    Now taking a solution $x = \begin{pmatrix}
        x_{\RNum{1}} \\
        x_{\RNum{2}}
    \end{pmatrix} = \bm{T}(t)x_0$ where $x_0 \in D(\bm{A})$ and combining \eqref{eqn: cauchy} with \eqref{eqn: two string side}, we find that there is $C_4 > 0$ such that
    \begin{equation}\label{eqn: trick}
            \int\limits_\sigma^\tau (x_{\RNum{1}}^*\bm{H}_{\RNum{1}}x_{\RNum{1}})(t, a) \, \mathrm{d}t
        \le C_4 \int\limits_{0}^{T} (x_{\RNum{2}}^*\bm{H}_{\RNum{2}}x_{\RNum{2}})(t, a) \, \mathrm{d} t.
    \end{equation}
    By combining  \eqref{eqn: two string simple} and the boundary condition $F_{\RNum{2}}(a) = - \sigma v_{\RNum{2}}(a)$, we get \begin{align*}
        &\langle  x, \bm{A}x\rangle_{\mathcal{X}} + \langle  \bm{A}x, x\rangle_{\mathcal{X}} = -\frac{1}{2\, \rho_{\RNum{2}}(a)} \Biggl( \sigma |x_3(t, a)|^2 + \frac{1}{\sigma} |T_{\RNum{2}}(a)x_4(t, a)|^2 \biggr)\le- k' (x_{\RNum{2}}^* \bm{H}_{\RNum{2}} x_{\RNum{2}})(t, a),
    \end{align*} for some $k'>0$. Inequality \eqref{eqn: trick} then yields
    \begin{align*}
        \|x(T)\|_{\mathcal{X}}^2 - \|x_0\|_{\mathcal{X}}^2 &= \int\limits_0^T \langle  x(t), \bm{A}x(t)\rangle_{\mathcal{X}} + \langle  \bm{A}x(t), x(t)\rangle_{\mathcal{X}} \, \mathrm{d}t\\
        &\le - k' \int\limits_0^T (x_{\RNum{2}}^*\bm{H}_{\RNum{2}}x_{\RNum{2}})(t, a)  \, \mathrm{d} t \\
        &\le -\frac{k'}{2} \Biggl( \int\limits_0^T (x_{\RNum{2}}^*\bm{H}_{\RNum{2}}x_{\RNum{2}})(t, a)  \, \mathrm{d} t +  \frac{1}{C_4}\int\limits_\sigma^\tau (x_{\RNum{1}}^*\bm{H}_{\RNum{1}}x_{\RNum{1}})(t, a)   \, \mathrm{d} t \biggr) \\
        &\le -k \int\limits_\sigma^\tau (x^*\bm{H}x)(t, a) \, \mathrm{d} t
    \end{align*}
    for suitable $k> 0$. 
    We conclude that $(\bm{T}(t))_{t\ge 0}$ is exponentially stable using Theorem~\ref{stability basic}.

\begin{figure}[!htbp]
  \centering
  \includegraphics[width=0.8\linewidth]{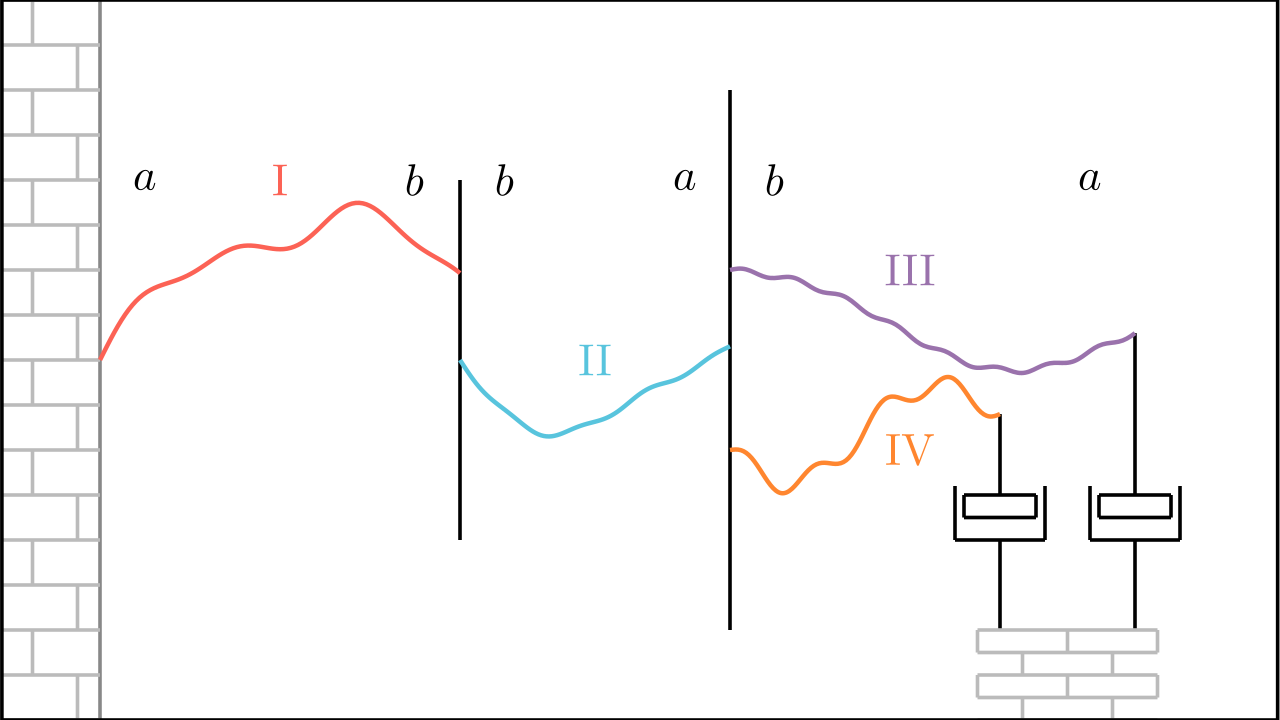}
  \caption{A more complicated string network}
  \label{fig:Ex2}
\end{figure}

\subsection{Four interconnected strings}
    Consider four vibrating strings connected as in Figure~\ref{fig:Ex2}. The first string I is fixed at one end and connected to the second string II via a mass-less bar. The other end of the second string II is connected to another mass-less bar. The third and fourth strings are also connected to this mass-less bar, and both of them are connected to a damper to ground on the other end.
    Mathematically, this means that the pH system~\eqref{pH} is given by 
    \begin{align*}
        \bm{P}_1 &=\diag(\bm{P}_{1,i})_{i=\RNum{1}}^{\RNum{4}},\quad \bm{P}_{1,i}=\begin{pmatrix}
            0& 1\\1&0
        \end{pmatrix},\\
\bm{H}&=\diag(\bm{H}_i)_{i=\RNum{1}}^{\RNum{4}},\quad 
        \bm{H}_i(\zeta)=\begin{pmatrix}
            \frac{1}{\rho_{i}(\zeta)} & 0\\  0 & T_{i}(\zeta)
        \end{pmatrix},
    \end{align*}
    where $\rho_{i}$ and $T_{i}$ are positive and continuously differentiable for all $i=\RNum{1},\ldots,\RNum{4},$ and $\bm{P}_0 = \bm{0}_8$.
    with the boundary conditions given by
    \begin{equation}\label{eqn: bound four string 1}
        \begin{aligned}
        &v_{\RNum{1}}(a) = 0,\ &v_{\RNum{1}}(b) = v_{\RNum{2}}(b),\\
        &v_{\RNum{3}}(b) = v_{\RNum{2}}(a),\ &v_{\RNum{4}}(b) = v_{\RNum{2}}(a)
        \end{aligned}
    \end{equation}
    and 
    \begin{align}\label{eqn: bound four string 2}
        &F_{\RNum{1}}(b) + F_{\RNum{2}}(b) = 0, \\
        &F_{\RNum{2}}(a) + F_{\RNum{3}}(b) + F_{\RNum{4}}(b)= 0 \nonumber  \\ \nonumber 
        &F_{\RNum{3}}(a) = - \sigma_{\RNum{3}}\, v_{\RNum{3}}(b),\quad 
        F_{\RNum{4}}(a) = - \sigma_{\RNum{4}}\, v_{\RNum{4}}(b), \nonumber 
    \end{align}
    where $\sigma_{\RNum{3}},\sigma_{\RNum{4}}>0$ are damping constants and
    \begin{align*}
         v_{\RNum{1}}( \delta) &= x_{\RNum{1}}(\delta),\ & v_{\RNum{2}}(\delta) &= x_3(\delta)\\
        v_{\RNum{3}}(\delta) &= x_5(\delta),\ &v_{\RNum{4}}(\delta) &= x_7(\delta)
    \end{align*}
    for $\delta \in \{a, b\}$ and
    \begin{align*}
        F_{\RNum{1}}(b) &= T_{\RNum{1}}(b)x_2(b),\ &F_{\RNum{1}}(a) &= -T_{\RNum{1}}(a)x_2(a)\\
        F_{\RNum{2}}(b) &= T_{\RNum{2}}(b)x_4(b),\ &F_{\RNum{2}}(a) &= -T_{\RNum{2}}(a)x_4(a) \\
        F_{\RNum{3}}(b) &= T_{\RNum{3}}(b)x_6(b),\ &F_{\RNum{3}}(a) &= -T_{\RNum{3}}(a)x_6(a) \\
        F_{\RNum{4}}(b) &= T_{\RNum{4}}(b)x_8(b),\ &F_{\RNum{4}}(a) &= -T_{\RNum{4}}(a)x_8(a).
    \end{align*}
    The domain of $\bm{A}$ is given by \[
    D(\bm{A}) = \{ x \in H^1((a, b); \mathbb{K}^n) \, |\, x \text{ satisfies \eqref{eqn: bound four string 1}, \eqref{eqn: bound four string 2}}\}.
    \]
    Then $\bm{A}$ generates a contraction semigroup, and Assumption~\ref{assumption} with $\bm{W}_B$ representing \eqref{eqn: bound four string 1}, \eqref{eqn: bound four string 2} in terms of boundary efforts and flows \eqref{eq:boundaryflow_effort} is satisfied by an argument similar to that of Section~\ref{exm: two strings}. 
    Moreover, \eqref{eqn: birgit cond} and \eqref{eqn: birgit cond 2} fail due to a~similar argument to that in Section~\ref{exm: two strings}, because \begin{equation}\label{eqn: four string simple}
        \begin{split}
            &\langle  x, \bm{A}x\rangle_{\mathcal{X}}+ \langle  \bm{A}x, x\rangle_{\mathcal{X}} = -\Biggl(\frac{\sigma_{\RNum{3}}\, |v_{\RNum{3}}(a)|^2}{\rho_{\RNum{3}}(a)} + \frac{\sigma_{\RNum{4}}\, |v_{\RNum{4}}(a)|^2}{\rho_{\RNum{4}}(a)}\biggr)
        \end{split}
    \end{equation}
    and for $\delta \in \{a, b\}$ \[\|\bm{H}(\delta)x(\delta)\|^2 =\sum\limits_{j\in\{\RNum{1}, \RNum{2}, \RNum{3}, \RNum{4}\}} \left(\frac{|v_j(\delta)|^2}{\rho_{j}(\delta)^2}+ |F_j(\delta)|^2\right).\]
    
    Now fix $\gamma > 0$ such that 
    \[
    \gamma \bm{H}(\zeta) \ge \pm \bm{P}_1^{-1}, \quad \forall \, \zeta \in [a, b].
    \]
    We can apply Proposition~\ref{lem: side ways} to all the equations
    \begin{equation}\label{eqn: all strings}
            \frac{\mathrm{d}}{\mathrm{d}t} x_j(t, \zeta) = \begin{pmatrix}
            0 & 1 \\
            1 & 0
        \end{pmatrix}\frac{\partial}{\partial \zeta} (\bm{H}_j(\zeta) x_j(t, \zeta))
    \end{equation}
    with $j = \RNum{1}, \RNum{2}, \RNum{3}, \RNum{4}.$
    Using a similar reasoning as in the derivation of \eqref{eqn: trick} in Section~\ref{exm: two strings}, we derive
    \begin{equation}\label{eqn: estimate middle}
        \begin{split}
            &\int\limits_{\sigma -2\gamma (b-a)}^{\tau + 2\gamma (b-a)} (x_{\RNum{2}}^*\bm{H}_{\RNum{2}}x_{\RNum{2}})(t, a) \, \mathrm{d}t \le C_1 \int\limits_{S}^T \Biggl((x_{\RNum{3}}^*\bm{H}_{\RNum{3}}x_{\RNum{3}})(t, a) + (x_{\RNum{4}}^*\bm{H}_{\RNum{4}}x_{\RNum{4}})(t, a) \biggr) \ \mathrm{d}t.
        \end{split}
    \end{equation}
    for some $C_1>0$ where \[3\gamma(b-a) = \sigma, \quad \tau > \sigma + 2\gamma (b-a)\] are fixed and \[S =  \sigma - 3\gamma(b-a) = 0,\quad T = \tau + 3\gamma (b-a).\]
    This follows since the boundary conditions imply that there is $\bm{R} \in \mathbb{R}^{2 \times 4}$ with \[x_{\RNum{2}}(t, a) = \bm{R}\begin{pmatrix}
        x_{\RNum{3}}(b) \\
        x_{\RNum{4}}(b)
    \end{pmatrix}\] which implies that there is $C_2> 0$ with \begin{align*}
        (x_{\RNum{2}}^*\bm{H}_{\RNum{2}}x_{\RNum{2}})(t, a) \le C_2\, ( (x_{\RNum{3}}^*\bm{H}_{\RNum{3}}x_{\RNum{3}} + (x_{\RNum{4}}^*\bm{H}_{\RNum{4}}x_{\RNum{4}}))(t, b).
    \end{align*} We then combine this with the application of Proposition~\ref{lem: side ways} for \eqref{eqn: all strings} with $j = \RNum{3}, \RNum{4}$ to get \eqref{eqn: estimate middle}.
    We can use an argument similar to that in Section~\ref{exm: two strings} to obtain
    \begin{equation}\label{eqn: trick again}
        \begin{split}
            &\int\limits_\sigma^\tau (x_{\RNum{1}}^*\bm{H}_{\RNum{1}}x_{\RNum{1}})(t, a) \, \mathrm{d} t \le C_2 \int\limits_{\sigma - 2\gamma (b-a)}^{\tau + 2\gamma(b-a)} (x_{\RNum{2}}^*\bm{H}_{\RNum{2}}x_{\RNum{2}})(t, a) \, \mathrm{d} t,
        \end{split}
    \end{equation}
    with $C_2>0$, see \eqref{eqn: trick}. Now, \eqref{eqn: four string simple} implies that there is $k' > 0$ such that \begin{equation}\label{eqn: dissipation rewritten}
        \begin{split}
        &\langle x, \bm{A}x \rangle_{\mathcal{X}} + \langle \bm{A}x, x \rangle_{\mathcal{X}}\le -k' (x^*_{\RNum{3}} \bm{H}_{\RNum{3}} x_{\RNum{3}} + x^*_{\RNum{4}} \bm{H}_{\RNum{4}} x_{\RNum{4}})(t, a).
        \end{split}
    \end{equation}
    We can now combine \eqref{eqn: dissipation rewritten}, \eqref{eqn: trick again} and \eqref{eqn: estimate middle} to obtain
    \begin{align*}
         \|x(T)\|_{\mathcal{X}}^2 - \|x_0\|_{\mathcal{X}}^2 &= \int\limits_0^T\bigl( \langle  x(t), \bm{A}x(t)\rangle_{\mathcal{X}} + \langle  \bm{A}x(t), x(t)\rangle_{\mathcal{X}}\bigr) \, \mathrm{d}t \\
         &\le -k' \int\limits_0^T (x^*_{\RNum{3}} \bm{H}_{\RNum{3}} x_{\RNum{3}} + x^*_{\RNum{4}} \bm{H}_{\RNum{4}} x_{\RNum{4}})(t, a) \, \mathrm{d}t  \\
         &\quad -\frac{k'}{C_1}  \int\limits_{\sigma -2\gamma (b-a)}^{\tau + 2\gamma (b-a)} (x_{\RNum{2}}^*\bm{H}_{\RNum{2}}x_{\RNum{2}})(t, a) \, \mathrm{d}t - \frac{k'}{C_1 C_2}  \int\limits_\sigma^\tau (x_{\RNum{1}}^*\bm{H}_{\RNum{1}}x_{\RNum{1}})(t, a) \, \mathrm{d} t \\
         &\le -k \int\limits_\sigma^\tau (x^*\bm{H}x)(t, a) \ \mathrm{d}t
    \end{align*}
    for a constant $k>0$. We conclude that $(\bm{T}(t))_{t \ge 0}$ is exponentially stable by Theorem~\ref{stability basic}.

\section{Conclusion}
\label{sec:conclusion}
In this work, we used energy methods to analyze the long- and short-time behavior of distributed parameter pH systems. We have in particular given a characterization of exponential stability for such systems that generalizes a well-known sufficient condition in the literature. Using this new characterization, we can verify exponential stability for examples where the old condition is not applicable. In future work, we will use our condition to obtain sufficient conditions for the stability analysis of more complicated networks of pH systems.

\bibliographystyle{plain}
\bibliography{references}

\appendix
\section*{Proof of Proposition~\ref{lem: side ways}}
\label{app: proof}
 Property (a) follows from the non-negativity of the integrand. The proof of properties (b) and (c) follows like in the proof of Lemma~9.1.2 in \cite{jacob2012linear}. For completeness we repeat the arguments using our notation. In the following, we abbreviate 
\[
\bm{K}(\zeta):=\frac{{\rm d} \bm{H}}{{\rm d } \zeta}(\zeta) + \bm{H}(\zeta)\bm{P}_0 \bm{P}_1^{-1} + \bm{P}_1^{-1}\bm{P}_0 \bm{H}(\zeta),
\]
and for given $\zeta \in [a, b]$ and $t>0$, we first compute that
 \begin{align*}
     \frac{\partial}{\partial \zeta}\left(x^* \bm{H}x \right)(t, \zeta)&= \left(\left(\frac{\partial x}{\partial \zeta}\right)^* \bm{H} x\right)(t, \zeta) + \left(x^* \, \frac{\partial }{\partial \zeta} \Biggl( \bm{H} x\biggr)\right)(t, \zeta) \\
     &= \Biggl[\Biggl(\bm{P}_1^{-1}\frac{\partial x} {\partial t} - \frac{{\rm d} \bm{H}}{{\rm d } \zeta} x - \bm{P}_1^{-1}\bm{P}_0 \bm{H}x\biggr)^* x \biggr](t, \zeta) \\
     &\quad +  \left[ x^* \left( \bm{P}_1^{-1} \frac{\partial x}{\partial t} - \bm{P}_1^{-1} \bm{P}_0 \bm{H}x\right) \right](t, \zeta) \\
     &= \frac{\partial }{\partial t} \left( x^* \bm{P}_1^{-1} x\right)(t, \zeta)  - (x^*\bm{K}x)(t, \zeta).
 \end{align*}
 Now calculate the derivative of $\bm{F}$
 \begin{align*}
     \frac{\mathrm{d}}{d\zeta} \bm{F}(\zeta) &= \gamma  (x^*\bm{H}x)(t,\sigma - \gamma (\zeta-a)  +\gamma (x^*\bm{H}x)(t,\tau + \gamma (\zeta-a)) \\ 
     &\quad + \int\limits_{\sigma - \gamma (\zeta-a)}^{\tau + \gamma (\zeta-a)} \frac{\partial}{\partial \zeta}\left(x^* \bm{H}x \right)(t, \zeta) \, \mathrm{d}\zeta \\
     &= \gamma  (x^*\bm{H}x)(t,\sigma - \gamma (\zeta-a) + \gamma (x^*\bm{H}x)(t,\tau + \gamma (\zeta-a)  \\
     &\quad  +\int\limits_{\sigma - \gamma (\zeta-a)}^{\tau + \gamma (\zeta-a)} \frac{\partial }{\partial t} \left( x^* \bm{P}_1^{-1} x\right)(t, \zeta)  \, \mathrm{d}\zeta  -\int\limits_{\sigma - \gamma (\zeta-a)}^{\tau + \gamma (\zeta-a)} (x^*\bm{K}x)(t, \zeta) \, \mathrm{d}\zeta \\
     &= (x^* (\gamma \bm{H} - \bm{P}_1^{-1})x)(t, \sigma -\gamma(\zeta-a)) + (x^* (\gamma \bm{H} + \bm{P}_1^{-1})x)(t, \tau +\gamma(\zeta-a)) \\ &\quad - \int\limits_{\sigma - \gamma (\zeta-a)}^{\tau + \gamma (\zeta-a)}   (x^*\bm{K}x)(t, \zeta) \, \mathrm{d}\zeta.
 \end{align*}
 The first term is positive since $\gamma$ is large enough that $\gamma \bm{H}(\zeta) \ge \pm \bm{P}_1^{-1}$. Since $\tfrac{{\rm d} \bm{H}}{{\rm d} \zeta}$ is bounded, there is $\kappa > 0$ such that 
$\bm{K}(\zeta)
 \le \kappa \bm{H}(\zeta)$ for $ \zeta \in[a, b]$.
 This implies 
$ \tfrac{\mathrm{d}}{d\zeta }\bm{F}(\zeta) \ge - \kappa \bm{F}(\zeta)$,
 which implies the first inequality in \eqref{energy time}. The second inequality follows from the first by noting that $\Tilde{x}(t, \zeta) = x(t, b+a-\zeta)$
 solves the equation
 \[
 \frac{\partial}{\partial t}\Tilde{x}(t,\zeta) = -\bm{P}_1 \frac{\partial}{\partial \zeta} (\Tilde{\bm{H}}(\zeta) \Tilde{x}(t, \zeta)) + \bm{P}_0 \Tilde{\bm{H}}(\zeta) \Tilde{x}(t, \zeta),
 \]
 where $\Tilde{\bm{H}}(\zeta) = \bm{H}(a+b-\zeta)$ which is again of the form \eqref{eqn: no boundary}.
 Finally, (c) follows from (b). \hfill $\square$

\end{document}